\newcommand{\midarrowa}{\tikz \draw[-{angle 90}] (0,0)--+(.1,0);}  
\tikzset{every picture/.style={line width=0.9pt}}
\providecommand{\keywords}[1]
{
  \small	
  \textbf{\textit{Keywords---}} #1
}
\newcommand{\floor}[1]{\left\lfloor #1 \right\rfloor}
\DeclareRobustCommand*{\ora}{\vec}
\newcommand\blfootnote[1]{%
  \begingroup
  \renewcommand\thefootnote{}\footnote{#1}%
  \addtocounter{footnote}{-1}%
  \endgroup
}
	\newtheorem{theorem}{Theorem}
	\newtheorem{corollary}[theorem]{Corollary}
	\newtheorem{lemma}[theorem]{Lemma}
	\newtheorem{conjecture}[theorem]{Conjecture}
 	\author{Jasine Babu, Deepu Benson, and Deepak Rajendraprasad \\
        \small Indian Institute of Technology Palakkad \\}
\date{}
\title{\textbf{Improved Bounds for the Oriented Radius of Mixed Multigraphs}}
\begin{document}

\maketitle
\begin{abstract}

A \emph{mixed multigraph} is a multigraph which may contain both
undirected and directed edges. 
An \emph{orientation} of a mixed multigraph $G$ is an assignment of exactly one direction to each undirected edge of $G$. 
A mixed multigraph $G$ can be oriented to a strongly connected digraph if and only if $G$ is bridgeless and strongly connected [Boesch and Tindell, Am.\ Math.\ Mon., 1980]. 
For each $r \in \mathbb{N}$, let $f(r)$ denote the smallest number such that any strongly connected bridgeless mixed multigraph with radius $r$ can be oriented to a digraph of radius at most $f(r)$. 
We improve the current best upper bound of $4r^2+4r$ on $f(r)$ [Chung, Garey and Tarjan, Networks, 1985] to $1.5 r^2 + r + 1$. 
Our upper bound is tight upto a multiplicative factor of $1.5$ since, $\forall r \in \mathbb{N}$, there exists an undirected bridgeless graph of radius $r$ such that every orientation of it has radius at least $r^2 + r$ [Chv{\'a}tal and Thomassen, J. Comb. Theory. Ser. B., 1978]. 
We prove a marginally better lower bound, $f(r) \geq r^2 + 3r + 1$,
for mixed multigraphs. While this marginal improvement does not help with asymptotic estimates, it clears a natural suspicion that, like undirected graphs,  $f(r)$ may be equal to $r^2 + r$ even for mixed multigraphs.
En route, we show that if each edge of $G$ lies in a cycle of length at most $\eta$, then the oriented radius of $G$ is at most $1.5 r \eta$. 
All our proofs are constructive and lend themselves to polynomial time algorithms.
\end{abstract} 

\keywords{Mixed multigraph, Oriented radius, Strong orientation}

\blfootnote{Email addresses:
jasine$@$iitpkd.ac.in (Jasine Babu), 
bensondeepu$@$gmail.com (Deepu Benson),
deepak$@$iitpkd.ac.in (Deepak Rajendraprasad)
}

\section{Introduction}
\label{secIntroduction}

A \emph{mixed multigraph} (\emph{mixed graph} for short) is a multigraph in which each edge could be either directed or undirected. A \emph{walk} from a vertex $v_1$ to a vertex $v_k$ in a mixed graph $G$ is a sequence of vertices $v_1, v_2, \ldots, v_k$ such that $\forall i \in [k-1]$, $G$ contains either an undirected edge $\{v_i, v_{i+1}\}$ or an edge directed from $v_i$ to $v_{i+1}$. A \emph{trail} is a walk without repeated edges. A \emph{path} is a trail without repeated vertices, except possibly $v_1 = v_k$. 
A mixed graph is \emph{strongly connected} (\emph{connected} for short) if there is a path from each vertex to every other vertex. 
Notice that this is a common generalisation of connectivity of undirected  graphs and strong connectivity of digraphs.
A \emph{bridge} in a connected mixed graph $G$ is an undirected edge whose removal disconnects $G$.
A bridgeless connected mixed graph is called \emph{$2$-edge connected}.
An \emph{orientation} of a mixed graph $G$ is an assignment of exactly one direction to each undirected edge of $G$. A mixed graph $G$ is called \emph{strongly orientable} if it can be oriented to a strongly connected digraph.

The first major study about strongly orientable graphs was by H.~E.~Robbins in 1939 \cite{robbins1939theorem}, where he proved that an undirected graph is strongly orientable if and only if it is $2$-edge connected. Robbins motivated the study using the real-world scenario of converting a network of two-way streets to one-way streets without sacrificing reachability from any point to another. In the ``real-world'' it is very well possible that some, but not all, of the streets in the network are already one-way. Hence the same scenario motivates the study of strong orientability of mixed graphs.
In 1980, Boesch and Tindell \cite{boesch1980robbins} extended Robbins' theorem to mixed graphs. They proved that a mixed graph is strongly orientable if and only if it is $2$-edge connected. 
Neither of these works approached the problem with a view to contain the distance blow up that may result from the orientation. This was taken up by Chv{\'a}tal and Thomassen in 1978 for undirected graphs \cite{chvatal1978distances}, and Chung, Garey and Tarjan in 1985 for mixed graphs \cite{chung1985strongly}. We need some more terminology before stating their results.

Let $G$ be an undirected, directed or mixed graph. For a vertex $u$ in $G$, $N[u]$ denotes the set consisting of $u$ and all its in-neighbours, out-neighbours and undirected neighbours. The \emph{length} of a path in $G$ is the number of edges (directed and undirected) in the path. The \emph{distance} $d_G(u,v)$ from a vertex $u$ to a vertex $v$ in $G$ is the length of a shortest path from $u$ to $v$ in $G$. The \emph{out-eccentricity} $e_{out}(u)$ of $u$ is $\max\{d_G(u, v) \mid v \in V(G)\}$ and \emph{in-eccentricity} $e_{in}(u)$ is $\max\{d_G(v,u) \mid v \in V(G)\}$. The \emph{eccentricity} $e(u)$ of $u$ is $\max\{e_{out}(u), e_{in}(u)\}$. The \emph{radius} of $G$ is $\min\{e(u) \mid u \in V(G)\}$ and \emph {diameter} is $\max\{e(u) \mid u \in V(G)\}$. A vertex $u$ of $G$ is a central vertex of $G$ if $e(u)$ is equal to the radius of $G$. The radius and diameter of a disconnected graph are taken to be infinite. The \emph{oriented radius (diameter)} of $G$ is the minimum radius (diameter) of an orientation of $G$. 
For each $r \in \mathbb{N}$, let $f(r)$ (resp., $\bar{f}(r)$) denote the smallest number such that any $2$-edge connected mixed graph (resp., undirected graph) with radius $r$ can be oriented to a strongly connected digraph of radius at most $f(r)$ (resp., $\bar{f}(r)$). 
Since every $2$-edge connected undirected graph is also a $2$-edge connected mixed graph, $\bar{f} \leq f$.

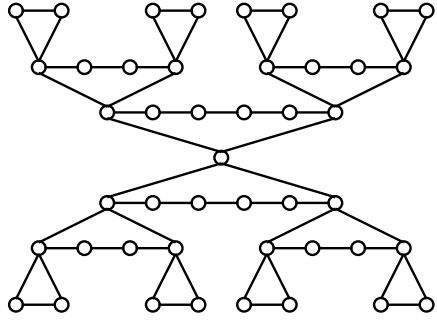
\begin{figure}[tb]
    
    \centering
   
\begin{tikzpicture}[scale=0.15]
\begin{scope}[every node/.style={sloped,allow upside down}]

\draw (0,2) circle (0.6cm);

\draw (2,6) circle (0.6cm);
\draw (6,6) circle (0.6cm);
\draw (10,6) circle (0.6cm);

\draw (-2,6) circle (0.6cm);
\draw (-6,6) circle (0.6cm);
\draw (-10,6) circle (0.6cm);
%%%

\draw (12,10) circle (0.6cm);
\draw (16,10) circle (0.6cm);
\draw (8,10) circle (0.6cm);
\draw (4,10) circle (0.6cm);

\draw (-12,10) circle (0.6cm);
\draw (-16,10) circle (0.6cm);
\draw (-8,10) circle (0.6cm);
\draw (-4,10) circle (0.6cm);

\draw (18,15) circle (0.6cm);
\draw (14,15) circle (0.6cm);

\draw (-2,15) circle (0.6cm);
\draw (-6,15) circle (0.6cm);

\draw (2,15) circle (0.6cm);
\draw (6,15) circle (0.6cm);

\draw (-18,15) circle (0.6cm);
\draw (-14,15) circle (0.6cm);

\draw (-17.50,15)--(-14.50,15);
\draw (-5.50,15)--(-2.50,15);
\draw (2.50,15)--(5.50,15);
\draw (14.50,15)--(17.50,15);

\draw (-15.50,10)--(-12.50,10);
\draw (-11.50,10)--(-8.50,10);
\draw (-7.50,10)--(-4.50,10);

\draw (4.50,10)--(7.50,10);
\draw (8.50,10)--(11.50,10);
\draw (12.50,10)--(15.50,10);

\draw (-9.50,6)--(-6.50,6);
\draw (-5.50,6)--(-2.50,6);
\draw (-1.50,6)--(1.50,6);
\draw (2.50,6)--(5.50,6);
\draw (6.50,6)--(9.50,6);

\draw (-16,10.5)--(-18,14.5);
\draw (-4,10.5)--(-6,14.5);
\draw (-16,10.5)--(-14,14.5);
\draw (-4,10.5)--(-2,14.5);

\draw (16,10.5)--(18,14.5);
\draw (4,10.5)--(6,14.5);
\draw (16,10.5)--(14,14.5);
\draw (4,10.5)--(2,14.5);

\draw (10,6.5)--(16,9.5);
\draw (10,6.5)--(4,9.5);

\draw (-10,6.5)--(-16,9.5);
\draw (-10,6.5)--(-4,9.5);

\draw (0,2.5)--(10,5.5);
\draw (0,2.5)--(-10,5.5);

%%%%%%%%%%%%%%%%%%%%%%%%%%%%%%%%%%%%%%%%%%%%%%%%

\draw (2,-2) circle (0.6cm);
\draw (6,-2) circle (0.6cm);
\draw (10,-2) circle (0.6cm);

\draw (-2,-2) circle (0.6cm);
\draw (-6,-2) circle (0.6cm);
\draw (-10,-2) circle (0.6cm);

\draw (12,-6) circle (0.6cm);

\draw (16,-6) circle (0.6cm);
\draw (8,-6) circle (0.6cm);

\draw (4,-6) circle (0.6cm);

\draw (-12,-6) circle (0.6cm);

\draw (-16,-6) circle (0.6cm);
\draw (-8,-6) circle (0.6cm);
\draw (-4,-6) circle (0.6cm);

\draw (18,-11) circle (0.6cm);
\draw (14,-11) circle (0.6cm);

\draw (-2,-11) circle (0.6cm);
\draw (-6,-11) circle (0.6cm);

\draw (2,-11) circle (0.6cm);
\draw (6,-11) circle (0.6cm);

\draw (-18,-11) circle (0.6cm);
\draw (-14,-11) circle (0.6cm);

\draw (-17.50,-11)--(-14.50,-11);
\draw (-5.50,-11)--(-2.50,-11);
\draw (2.50,-11)--(5.50,-11);
\draw (14.50,-11)--(17.50,-11);

\draw (-15.50,-6)--(-12.50,-6);
\draw (-11.50,-6)--(-8.50,-6);
\draw (-7.50,-6)--(-4.50,-6);

\draw (4.50,-6)--(7.50,-6);
\draw (8.50,-6)--(11.50,-6);
\draw (12.50,-6)--(15.50,-6);

\draw (-9.50,-2)--(-6.50,-2);
\draw (-5.50,-2)--(-2.50,-2);
\draw (-1.50,-2)--(1.50,-2);
\draw (2.50,-2)--(5.50,-2);
\draw (6.50,-2)--(9.50,-2);

\draw (-16,-6.5)--(-18,-10.5);
\draw (-4,-6.5)--(-6,-10.5);
\draw (-16,-6.5)--(-14,-10.5);
\draw (-4,-6.5)--(-2,-10.5);

\draw (16,-6.5)--(18,-10.5);
\draw (4,-6.5)--(6,-10.5);
\draw (16,-6.5)--(14,-10.5);
\draw (4,-6.5)--(2,-10.5);

\draw (10,-2.5)--(16,-5.5);
\draw (10,-2.5)--(4,-5.5);

\draw (-10,-2.5)--(-16,-5.5);
\draw (-10,-2.5)--(-4,-5.5);

\draw (0,1.5)--(10,-1.5);
\draw (0,1.5)--(-10,-1.5);

\end{scope}
\end{tikzpicture}

    \caption{The undirected graph $G_3$ from \cite{chvatal1978distances}. While $G_3$ has radius $3$, every orientation of $G_3$ has radius at least $12$.}
    \label{fig:fig1}
\end{figure}

Chv{\'a}tal and Thomassen \cite{chvatal1978distances} showed that $\bar{f}(r) = r^2 + r$. 
The upper bound was established by exploiting the properties of a carefully chosen ear-decomposition. 
For the lower bound, they constructed a family of $2$-edge connected graphs $G_r$ of radius $r$ and oriented radius $r^2+r$ (see Figure \ref{fig:fig1} for $G_3$).
Extending the work of Boesch and Tindell on mixed graphs, Chung, Garey and Tarjan \cite{chung1985strongly} in 1985 proposed a linear-time algorithm to check whether a mixed graph has a strong orientation. The resultant orientation could have radius as high as $O(n)$, where $n$ is the number of vertices of the mixed graph. In the same paper, they extend the work of  Chv{\'a}tal and Thomassen on undirected graphs to design a polynomial time algorithm which provides a strong orientation for a $2$-edge connected mixed graph of radius $r$ with oriented radius at most $4r^2 + 4r$. Hence $r^2 + r \leq f(r) \leq 4r^2 + 4r$.

\paragraph{Our results.}
In this article, we establish that $r^2 + 3r - 1 \leq f(r) \leq 1.5r^2 +r + 1$.  We consider the $(8/3)$-factor improvement in the upper bound (over that of Chung, Garey and Tarjan) as the major contribution in this work. The proof is constructive and is presented as an algorithm which is easily seen to be polynomial-time (Section~\ref{secUpperBound}). In comparison, the improvement to the lower bound (over the optimal lower bound for undirected graphs due to Chv{\'a}tal and Thomassen) is marginal and does not help improve asymptotic estimates. But it clears a natural suspicion that $f = \bar{f}$. 
The proof is via a construction of an infinite family of mixed multigraphs parameterised by $r$ (Section~\ref{secLowerBound}). 
Since the diameter $d$ of a mixed graph and its radius $r$ are tied together by $r \leq d \leq 2r$, we can infer that any $2$-edge connected mixed multigraph of diameter $d$ has oriented diameter at most $3d^2 + 2d + 2$. 
In Section~\ref{secSpecialClass}, we illustrate that our algorithm provides a template to handle interesting special cases where there is a bound on the unavoidable penalty you pay for orienting an undirected edge, namely the length of the shortest cycle containing that edge. In particular, we show that if every edge of $G$ lies in a cycle of length at most $\eta$, then our algorithm gives an orientation with radius at most $1.5 r \eta$. 

The gap between our lower and upper bounds on $f(r)$ leaves room for a conjecture on the exact behaviour of $f(r)$. We are inclined to believe that
the upper bound has further room for improvement and show that our algorithm does not perform optimally on the family of graphs constructed to establish our lower bound. We conjecture that $f(r) = r^2 + O(r)$.

\subsubsection*{Further Literature}

Not surprisingly, undirected graphs have overshadowed mixed graphs in this line of research. Moreover, oriented diameter has received more attention than oriented radius. 
For each $d \in \mathbb{N}$, let $\bar{g}(d)$ denote the smallest number such that any $2$-edge connected undirected graph of diameter $d$ has an orientation with diameter at most $\bar{g}(d)$. Chv{\'a}tal and Thomassen \cite{chvatal1978distances} has shown that $\frac{1}{2}d^2+d \leq \bar{g}(d) \leq 2d^2+2d$ for all $d \geq 2$. Since the lower and upper bounds for $\bar{g}$ above differ by a factor of $4$, there have been several attempts to bridge this gap. 
In the same paper, Chv{\'a}tal and Thomassen showed that $\bar{g}(2) = 6$. Kwok, Liu and West \cite{kwok2010oriented} proved that  $9 \leq \bar{g}(3) \leq 11$. Together with Vaka, we showed that $\bar{g}(4) \leq 21$ \cite{10.1007/978-3-030-50026-9_8}. The major effort in the same paper for us was to show that $\bar{g}(d) \leq 1.373d^2 + 6.971d - 1$.
 
Since $\bar{g}$ is quadratic, there have been several studies on special graph classes restricted to which $\bar{g}$ is linear. AT-free graphs \cite{fomin2004free} and chordal graphs \cite{fomin2004complexity} are popular such classes. 
Huang, Li, Li, and Sun \cite{DBLP:journals/dmtcs/HuangLLS14} proved that the oriented radius of an undirected $2$-edge connected graph $G$ is at most $r(\eta-1)$. In this paper we also extend their approach to mixed graphs. 

Bounds (mostly upper) on oriented diameter of $2$-edge connected graphs in terms of other invariants like domination number \cite{fomin2004free}, minimum degree \cite{bau2015diameter}, \cite{surmacs2017improved}, and maximum degree \cite{dankelmann2018oriented} are also known. A final result of Chv{\'a}tal and Thomassen in their 1978 paper was that it is NP-hard to decide whether a given undirected graph has oriented diameter $2$. This has prompted a search for polynomial time algorithms for the problem on special graph classes. Eggemann and Noble \cite{eggemann2009minimizing} studied about the oriented diameter of planar graphs. For each constant $l$, they have proposed an algorithm that decides if a planar graph $G = (V,E)$ has an orientation with diameter at most $l$ and runs in time $O(c|V|)$, where the constant $c$ depends on $l$. 

Orientations of mixed graphs with objectives other than minimising the  radius and diameter have been studied. Aamand, Hjuler, Holm, and Rotenberg \cite{aamand2018one} studied the necessary and sufficient conditions for the existence of a strong orientation for an undirected or a mixed graph when the edges of that graph are partitioned into trails. They also provided a polynomial time algorithm for finding a strong trail orientation, if one exists, for undirected and mixed graphs. Blind \cite{blind2019output} studied $k$-arc-connected orientations in undirected and mixed graphs.  

\section{Upper Bound}
\label{secUpperBound}

In this section, we show that $f(r) \leq 1.5r^2 + r + 1$ by designing and analysing an orientation algorithm {\sc StrongOrientation} that takes a mixed graph $G$ of radius $r$ as input and outputs an orientation of $G$ of radius at most $1.5r^2+r+1$. This algorithm involves repeated applications of two symmetric orientation algorithms, Algorithm {\sc OrientOut} and Algorithm {\sc OrientIn}. To decide the order of execution of these two algorithms, we need to partition the sequence of first $r$ positive odd numbers into two nearly equal parts. The following lemma will help us there.

\begin{restatable}{lemma}{odds}
\label{oddlemma}
For every $n$, the set of first $n$ odd numbers $S=\{2i-1 \mid 1\leq i \leq n \}$ can be partitioned into two sets $A$ and $B$ such that
\begin{equation*}
 \lvert \Sigma_{a \in A} a - \Sigma_{b \in B} b\rvert \leq
    \begin{cases}
      2, & n=2 \\
      1, & n \neq 2
    \end{cases}       
\end{equation*}

Moreover, this partition can be generated in polynomial time.  
\end{restatable}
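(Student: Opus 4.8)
The plan is to give an explicit construction, grouping the odd numbers into blocks whose internal imbalance we can control. The workhorse is the observation that any four \emph{consecutive} odd numbers split evenly: for any odd $a$,
\[
a + (a+6) = (a+2) + (a+4),
\]
so placing $\{a, a+6\}$ in one part and $\{a+2, a+4\}$ in the other contributes nothing to $\sum_{a\in A} a - \sum_{b \in B} b$. Hence if we can peel off an initial \emph{core} of odd numbers whose two-way imbalance is already small, we may distribute all the remaining odd numbers in consecutive quadruples without disturbing that imbalance.

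Concretely, I would write $S = \{o_1, \dots, o_n\}$ with $o_i = 2i-1$ and split according to $n \bmod 4$. When $n \equiv 0$, group all of $S$ into $n/4$ consecutive quadruples, giving difference $0$. When $n \equiv 1$, put $o_1 = 1$ alone on one side (imbalance $1$) and quadruple the remaining $n-1 \equiv 0 \pmod 4$ numbers. When $n \equiv 3$, use the core $\{1,3,5\}$ split as $\{1,3\} \mid \{5\}$ (imbalance $1$) and quadruple the rest. In each of these cases the quadruples contribute $0$, so the total difference equals the core imbalance, which is at most $1$.

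The only delicate residue is $n \equiv 2 \pmod 4$, and this is where the stated exception originates. Since $\sum_{a\in S} a = n^2$, the quantity $\sum_{a\in A} a - \sum_{b\in B} b$ has the same parity as $n^2$, hence is even exactly when $n$ is even; so for even $n$ the natural target is difference $0$. For $n=2$, however, $S=\{1,3\}$ has only two elements and the best achievable imbalance is $\lvert 3-1\rvert = 2$, forcing the separate case. For $n \ge 6$ with $n \equiv 2 \pmod 4$ I would use the size-six core $\{1,3,5,7,9,11\}$, which does admit a perfectly balanced split, namely $\{7,11\} \mid \{1,3,5,9\}$ with both parts summing to $18$, and then quadruple the remaining $n-6 \equiv 0 \pmod 4$ numbers, yielding difference $0$. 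The main thing to get right is precisely this case analysis on $n \bmod 4$ together with the isolated behaviour at $n=2$; everything else reduces to the one-line quadruple identity. Finally, the construction is manifestly linear in $n$, so it runs in polynomial time.
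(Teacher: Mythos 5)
Your proposal is correct and follows essentially the same route as the paper's own proof: the identity $a+(a+6)=(a+2)+(a+4)$ for consecutive odd quadruples, combined with the residual cores $\emptyset$, $\{1\}$, $\{1,3\}\mid\{5\}$, and $\{7,11\}\mid\{1,3,5,9\}$ according to $n \bmod 4$, with $n=2$ handled separately. The parity remark explaining why $n=2$ is genuinely exceptional is a nice addition but not needed.
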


The proof of Lemma~\ref{oddlemma} is an interesting exercise. For the sake of completeness, we include it in the appendix.

We also use the following lemma during the design of our orientation algorithms. This is a generalization of a lemma by Chung, Garey and Tarjan \cite[Lemma~$5$]{chung1985strongly}. The statement of the original lemma and the proof of Lemma~\ref{tarlemma} are given in the appendix.  

\begin{restatable}{lemma}{goldbach}
\label{tarlemma}
Let $G$ be a strongly connected bridgeless mixed multigraph of radius $r$. Then any edge $pq$ of $G$ is on a cycle of length at most $2r+1$.  
\end{restatable}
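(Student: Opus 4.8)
My plan is to build a short closed walk through $pq$ using a central vertex and then prune it to a simple cycle. Fix a central vertex $c$, so that $d_G(c,v)\le r$ and $d_G(v,c)\le r$ for every vertex $v$. Orienting our traversal of the edge from $p$ to $q$ (this is forced when $pq$ is directed and is a free choice when it is undirected), I would concatenate a shortest path from $q$ to $c$, a shortest path from $c$ to $p$, and the edge $pq$ to obtain a closed walk $W$ through $pq$ of length at most $r+r+1=2r+1$. Everything then reduces to extracting from $W$ a \emph{simple} cycle that still uses $pq$ and is no longer than $W$.

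For the extraction I would isolate the following reusable statement: any closed walk of length $\ell$ that traverses a directed edge $u\to v$ contains a simple cycle through $u\to v$ of length at most $\ell$. Rotating the walk to begin with the step $u\to v$ and stopping at the first return to $u$ produces a sub-walk from $v$ to $u$ that cannot traverse $u\to v$ again (a second traversal would force an earlier visit to $u$); deleting repeated vertices turns this sub-walk into a simple $v$-to-$u$ path avoiding the edge, which closes up with $u\to v$ into the desired cycle. This settles the case of a directed edge $pq$ outright: a shortest path \emph{into} $c$ and a shortest path \emph{out of} $c$ can never use $p\to q$ (the former would have to re-enter $q$, the latter to leave $p$), so $W$ traverses $pq$ exactly once and the bound $2r+1$ comes out immediately.

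The genuinely delicate case --- and where I expect the main obstacle to sit --- is an undirected edge $pq$, because then the two geodesics making up $W$ may themselves run along $pq$ (in the direction $q\to p$), so $W$ can simply double back across the edge and the extraction degenerates. Equivalently, I now need a path of length at most $2r$, avoiding the edge $pq$, from $p$ to $q$ or from $q$ to $p$. Comparing the four distances $d_G(c,p),\,d_G(p,c),\,d_G(c,q),\,d_G(q,c)$ shows that one of the center routings $q\rightsquigarrow c\rightsquigarrow p$ or $p\rightsquigarrow c\rightsquigarrow q$ already avoids $pq$ and has length at most $2r$, \emph{unless} $p$ and $q$ lie on each other's shortest paths through $c$ so that deleting $pq$ strictly lengthens both $d_G(c,q)$ and $d_G(q,c)$. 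In this remaining ``radial'' configuration I would fall back on $2$-edge-connectivity: as $pq$ is not a bridge, $G$ with $pq$ deleted is still strongly connected, so a $pq$-avoiding detour between $p$ and $q$ exists, and the task is to certify that the shortest such detour has length at most $2r$. The natural tool is a shortcutting argument measured from $c$: such a detour is a geodesic of $G$ with $pq$ deleted, and any two of its vertices whose shortest paths to and from $c$ avoid $pq$ can be short-circuited through $c$ in at most $2r$ steps, contradicting minimality once the detour grows past $2r$. Making this airtight --- in particular controlling the detour vertices whose geodesics to $c$ are forced through $pq$, so that the estimate lands on exactly $2r$ rather than merely $O(r)$ --- is the crux; the odd cycle $C_{2r+1}$, on which every edge lies on a unique cycle of length exactly $2r+1$, shows that no slack is available.
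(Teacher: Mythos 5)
Your setup (route through a central vertex $c$, reduce to finding a closed walk of length at most $2r+1$ that traverses $pq$ exactly once, then loop-erase) is sound and matches the paper's reduction, and your treatment of a directed edge $pq$ is complete and correct. The gap is exactly where you place it: the undirected case. First, the step ``comparing the four distances shows that one of the two center routings already avoids $pq$'' is not an argument; the failure modes are that every shortest $q$--$c$ (resp.\ $p$--$c$, $c$--$p$, $c$--$q$) path is forced through $pq$, and you have not carried out the case analysis of which combinations can co-occur. Second, and more seriously, your fallback for the residual case is only a plan: you concede that making the shortcutting argument ``airtight'' is ``the crux,'' and that crux is precisely the content of the lemma. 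A shortest $pq$-avoiding $p$--$q$ detour need not be controllable by shortcutting through $c$, because the vertices on it may themselves be reachable from $c$ (or reach $c$) only via $pq$, which is the very obstruction you started with; nothing in your sketch bounds those vertices. There is also a definitional trap in this fallback: ``$pq$ is not a bridge'' in the Boesch--Tindell setting means $pq$ is not a cut edge of the \emph{underlying} multigraph, and this does \emph{not} imply that $G$ minus $pq$ is still strongly connected (take $a\to b$ directed together with an undirected edge $ab$), so ``a $pq$-avoiding detour between $p$ and $q$ exists'' in at least one direction needs justification rather than an appeal to deletion.

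The paper closes this case with an idea absent from your proposal. Define $X$ as the set of vertices $v$ such that \emph{every} shortest $u$--$v$ path uses $pq$, and $Y$ symmetrically for shortest $v$--$u$ paths. If $X\neq Y$, a witness $w$ in the symmetric difference immediately yields a closed walk of length at most $2r$ through $u$ using $pq$ exactly once. If $X=Y$, one shows $p$ and $q$ cannot both lie in $X$ (truncating a forced shortest path to $q$ gives an unforced one to $p$); if both lie in $\overline{X}$ the two center routings work; otherwise $pq$ crosses the cut $(X,\overline{X})$, and bridgelessness of the underlying multigraph supplies a \emph{second} edge $ab$ crossing this cut, through which the closed walk $u\leadsto a\to b\leadsto u$ (or its reverse, depending on the orientation of $ab$) of length at most $2r+1$ is routed, containing $pq$ exactly once. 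This second crossing edge is the mechanism that resolves your ``radial'' configuration, and without it (or an equivalent device) the proof is not complete.
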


Let $G$ be a strongly connected bridgeless mixed graph of radius $r$ with a central vertex $u$. We first describe Algorithm {\sc OrientOut}, in detail, which identifies a subgraph $H$ of $G$ such that $V(H)$ is an $(r-1)$-step dominating set of $V(G)$ and an orientation $\ora{H}$ of $H$ such that $\forall v \in V(\ora{H})$, $d_{\ora{H}}(u,v) \leq 2r$ and $d_{\ora{H}}(v,u) \leq 4r-1$. Using this algorithm alone, we can show that $f(r) \leq 2r^2+r$ using an induction on $r$, as discussed below.  

When $r=0$, the graph $G$ is a single isolated vertex and the assumption holds true trivially. Let us assume the assumption holds true for graphs of radius $r-1$. Let us contract the subgraph $H$ into a single vertex $v_H$ to obtain a graph $G'$. We can see that $G'$ has radius at most $r-1$. Thus by the induction hypothesis, $G'$ has an orientation with radius at most $2(r-1)^2+(r-1)=2r^2-3r+1$. Notice that, $G'$ and $H$ do not share any edges. Thus by combining the orientations of $G'$ and $H$, we can get an orientation $\ora{G}$ of $G$ with radius at most $(2r^2-3r+1) + (4r-1) = 2r^2+r$. Hence, by induction, the assumption is true for all values of $r$.      

The description of the $4$-stage algorithm {\sc OrientOut} is given below.  
 
\subsection*{Algorithm {\sc OrientOut}}

\newcommand{\emsec}[1]{
			\medskip
			\noindent
			\emph{#1}
			}

\emsec{Input:} A strongly connected bridgeless mixed multigraph $G$ and a vertex $u \in V(G)$ with eccentricity at most $r$.

\emsec{Output:} An orientation $\ora{H}$ of a subgraph $H$ of $G$ such that $N[u] \subseteq V(\ora{H}) $ and $\forall v \in V(\ora{H})$, $d_{\ora{H}}(u,v) \leq 2r$ and $d_{\ora{H}}(v,u) \leq 4r-1$.

We create $\ora{H}$ in $4$ stages. The vertices newly added to $\ora{H}$ in a stage will be referred to as the vertices \emph{captured} in that stage.

\emsec{Stage $0$.} Let $v$ be a vertex having multiple edges incident with $u$. If possible, these edges are oriented in such a way that all the $uv$ edges are part of a directed $2$-cycle. Notice that this does not increase any pairwise distances in $G$. We can also remove multiple oriented edges in the same direction between any pair of vertices without affecting any distance. Hence the only multi-edges left in $G$ are those which form a directed $2$-cycle. 

Let $X$ denote the set of all vertices with at least one edge incident with the vertex $u$. $X$ is partitioned into $X_{in}$, $X_{out}$ and $X_{un}$. A vertex $v \in X$ is said to be in $X_{in}$ if it has at least one directed edge towards $u$.  A vertex $v \in X \setminus X_{in}$ is said to be in $X_{out}$ if it has at least one directed edge from $u$. $X_{un} = X \setminus (X_{in} \cup X_{out})$. Notice that a vertex $v \in X_{un}$ has exactly one undirected edge incident with $u$. We initialize $X_{conf} = \emptyset$ (we will later identify this as the set of conflicted vertices in $X$). For each $v \in X$, let $l(v)$ denote the length of a shortest cycle containing an edge between $u$ and $v$. By Lemma~\ref{tarlemma}, $l(v) \leq 2r+1$. Let $s = \Sigma_{v \in X} l(v)$. 

\emsec{Stage $1$.}  Let $G = G_0$. We will orient some of the unoriented edges of $G_0$ incident with $u$ in this stage to obtain a mixed graph $G_1$ as follows.  Let $v_1, v_2, \ldots , v_k$ be an arbitrary ordering of vertices in $X$ such that all the vertices in $X_{in}$ comes before any vertex in $X_{out}$ and all the vertices in $X_{out}$ comes before any vertex in $X_{un}$. Let $c= \lvert X_{in} \cup X_{out} \rvert$. 
\

~\\
Repeat Steps $(i)$ to $(iii)$ for $i=c+1$ to $\lvert X \rvert$:
\begin{enumerate}[label=\roman*.]

\item
An edge $uv_i$ is oriented from $v_i$ to $u$ if the parameter $s$ remains the same even after such an orientation. The vertex $v_i$ is added to $X_{in}$ in this case.

\item
Otherwise, the edge $uv_i$ is oriented from $u$ to $v_i$ if the parameter $s$ remains the same even after such an orientation. The vertex $v_i$ is added to $X_{out}$ in this case.

\item
Otherwise, we leave the edges $uv_i$ unoriented. Such an edge $uv_i$ is called a \emph{conflicted edge} and the vertex $v_i$ is called a \emph{conflicted vertex}. The vertex $v_i$ is added to $X_{conf}$ in this case.  
 
\end{enumerate}   

\emsec{Observation $1$.} If an edge $uv_i$ is conflicted then there exists an edge $\overrightarrow{v_ju}$, at the time of processing $v_i$, where $v_j \in X_{in}$ and $j<i$ such that the edge $uv_i$ is a part of every shortest cycle containing the edge $\overrightarrow{v_ju}$. Otherwise the parameter $s$ would have remained the same even if the edge $uv_i$ gets oriented from $u$ to $v_i$. Hence, for every vertex $v \in X_{conf}$, $uv$ is an undirected edge and there exists a $w \in X_{in}$ such that every shortest path from $u$ to $w$ starts with the edge $uv$.

We have the following distance bounds in $G_1$: $\forall x \in X_{in}$, $d_{G_1}(u,x) \leq 2r$ and $\forall y \in X_{out}$, $d_{G_1}(y,u) \leq 2r$. These bounds follow from the fact that the edges are oriented in Stage $1$ only if the parameter $s$ remains unchanged.

\emsec{Stage~$2$.} In this stage, we try to capture all the vertices in $X_{in}$. Let $T_{out}$ be a minimal tree formed by a breadth first search in $G_1$, starting from $u$ and including at least one path from $u$ to $v$ for each $v \in X_{in}$. Notice that $T_{out}$ has height at most $2r$ and every leaf of $T_{out}$ is a vertex in $X_{in}$. Orient the tree $T_{out}$ outward from $u$ in $G_1$ to obtain $G_2$. Let $S_1 = V(T_{out})$. Every vertex in $S_1$ is part of a directed cycle in $G_2$ containing $u$ and of length at most $2r+1$. Hence for any vertex $x \in S_1$, we have $d_{G_2}(u,x) \leq 2r$ and $d_{G_2}(x,u) \leq 2r$. Notice that, by Observation~$1$, any edge $e=uv$, where $v \in X_{conf}$, will get oriented in Stage $2$. Hence $(X_{in} \cup X_{conf}) \subseteq S_1$.         

\emsec{Stage $3$.} Let $X_{out}' = X_{out} \setminus S_1$. In this stage, we try to capture all the vertices in $X_{out}'$. 
Let $G_3$ be a graph obtained from $G_2$ by contracting $G[S_1]$ into a single vertex $u^*$. 
Every arc $\ora{uv}$, $v \in X_{out}'$, was part of a cycle of length at most $2r+1$ in $G_1$. Hence $v$ has a path of length at most $2r-1$ to some vertex in $S_1$. This path together with the edge $\ora{u^*v}$ ensures that every vertex $v$ in $X_{out}'$ lies in a cycle of length at most $2r$ containing $u^*$ in $G_3$.
Let $T_{in}$ be a minimal tree formed by a reverse breadth first search in $G_3$ starting from $u^*$ such that it contains a path from each $v \in X_{out}'$ to $u^*$. 
Notice that, $T_{in}$ has height at most $2r-1$ and every leaf of $T_{in}$ is a vertex in $X_{out}'$. 
The reduction of $1$ in height is because $(X_{in} \cup X_{conf}) \subseteq S_1$.
Orient the tree $T_{in}$ inward towards $u^*$ in $G_3$ to obtain $G_4$. Let $S_2 = V(T_{in}) \setminus \{u^*\}$. Then for any vertex $y \in S_2$, we have the following bounds, $d_{G_4}(u^*,y) \leq 2r-1$ and $d_{G_4}(y,u^*) \leq 2r-1$. Let $H = G[S_1 \cup S_2]$. Notice that, $T_{out}$ and $T_{in}$ do not share any edges in $H$. The edges of $H$ are oriented consistently with the orientation of $T_{out}$ and $T_{in}$ to obtain $\ora{H}$. We have already proved that for any vertex $x \in S_1$, $d_{G_2}(u,x) \leq 2r$ and $d_{G_2}(x,u) \leq 2r$. Hence $\forall x \in S_1$, $d_{\ora{H}}(u,x) \leq 2r$ and $d_{\ora{H}}(x,u) \leq 2r$. Consider a vertex $y \in S_2$. We have proved that $d_{G_4}(y,u^*) \leq 2r-1$. This implies $d_{\ora{H}}(y,u) \leq (2r-1)+(2r) = 4r-1$. Since height of $T_{in}$ is at most $2r-1$, $y$ has a directed path of length at most $2r-2$ from a vertex $v \in X_{out}'$ in $G_4$. This path along with the arc $\ora{uv}$ will give us the additional bound $d_{\ora{H}}(u,y) \leq 2r$ for all $y \in S_2$. Thus $\ora{H}$ has radius at most $4r-1$ with $d_{\ora{H}}(u,x) \leq 2r$ and $d_{\ora{H}}(x,u) \leq 4r-1$, $\forall x \in V(\ora{H})$.\\ 

As mentioned earlier, using algorithm {\sc OrientOut} and an induction on $r$, we can get the upper bound $f(r) \leq 2r^2+r$. This itself is a $2$-factor improvement over the existing bound of $4r^2+4r$ obtained by Chung, Garey and Tarjan \cite{chung1985strongly}. Notice that, in $\ora{H}$, the bound on $e_{out}(u)$ is better than that of $e_{in}(u)$ by a factor of $2$. To exploit this gap, we design a partner algorithm {\sc OrientIn} using a symmetric construction. 

\subsection*{Algorithm {\sc OrientIn}}

\emsec{Input:} A strongly connected bridgeless mixed multigraph $G$ and a vertex $u \in V(G)$ with eccentricity at most $r$.

\emsec{Output:} An orientation $\ora{H}$ of a subgraph $H$ of $G$ such that $N[u] \subseteq V(\ora{H})$ and $\forall v \in V(\ora{H})$, $d_{\ora{H}}(u,v) \leq 4r-1$ and $d_{\ora{H}}(v,u) \leq 2r$.

Let $G$ be a strongly connected bridgeless mixed graph of radius $r$ and $u$ be a central vertex of $G$. To show that $f(r) \leq 1.5r^2+r+1$, we design an $r$-phase algorithm in which each phase involves an application of either {\sc OrientOut} or {\sc OrientIn}. For convenience, we number the phases from $r$ down to $1$. A graph $G_i$ and a vertex $u_i \in V(G_i)$ with eccentricity at most $i$ are the inputs to the algorithm {\sc OrientOut} or {\sc OrientIn} executed in Phase~$i$ of algorithm {\sc StrongOrientation}. $\ora{H_i}$ is the output of the algorithm executed in Phase~$i$.

\subsection*{Algorithm {\sc StrongOrientation}}

\emsec{Input:} A strongly connected bridgeless mixed multigraph $G$ and a vertex $u \in V(G)$ of eccentricity $r$.

\emsec{Output:} An orientation $\ora{G}$, of $G$, with radius at most $1.5r^2+r+1$.

\begin{algorithmic}[1]
\State $S \gets \{2i-1 \mid 1\leq i \leq r \}$. 
\State Obtain a partition $(A,B)$ of $S$ such that $|\Sigma_{a \in A} a - \Sigma_{b \in B} b| \leq 2$.
\State $I_A \gets \{i \mid (2i-1) \in A \}$, $I_B \gets \{i \mid (2i-1) \in B \}$.   
\State$i \gets r$, $G_i \gets G$, $u_i \gets u$.   
\Repeat
\If {$i \in I_A$} 
        \State $\ora{H_i} \gets ${\sc OrientOut}$(G_i,u_i)$.
\Else        
        \State $\ora{H_i} \gets ${\sc OrientIn}$(G_i,u_i)$.
\EndIf
\State  Obtain $G_{i-1}$ by contracting $V(\ora{H_i})$ to a single vertex $u_{i-1}$. 
\State $i \gets i - 1$.
\Until{$i = 0$}.
\State Take the union of the orientations $\{\ora{H_1}, \ldots , \ora{H_r} \}$ and orient the remaining edges arbitrarily to obtain an orientation $\ora{G}$ of $G$.   
\end{algorithmic}

The existence of the partition used in Step~$2$ of the algorithm is guaranteed by Lemma~\ref{oddlemma} and can be computed in polynomial time.   
Since $N[u_i] \subseteq V(\ora{H_i})$, after each phase, the radius of the resultant graph reduces by at least $1$. Hence, the vertex $u_{i-1}$ of $G_{i-1}$ has eccentricity at most $i-1$. Further notice that for $1 \leq i \leq r$, the graphs $\ora{H_i}$ are pair-wise edge disjoint and hence their union gives a consistent orientation $\ora{G}$ of $G$. For $1 \leq i \leq r$, let $e_{in}^i= \max\{d_{\ora{H_{i}}}(v,u_i) \mid v \in V(\ora{H_i})\}$ denote the in-eccentricity of $u_i$ in $\ora{H_i}$ and $e_{out}^i = \max\{d_{\ora{H_{i}}}(u_i,v) \mid v \in V(\ora{H_i})\}$ denote the out-eccentricity of $u_i$ in $\ora{H_i}$.
By combining the guarantees given by the two algorithms {\sc OrientOut} and {\sc OrientIn}, for the in-distance and out-distance in $\ora{H_i}$, we have the following bounds for $e_{out}(u)$ and $e_{in}(u)$ of $\ora{G}$.     
\begin{itemize}
\item

$ e_{out}(u) \leq \Sigma_{i=1}^{r}e_{out}^i = \Sigma_{i \in I_A} (2i) + \Sigma_{i \in I_B} (4i-1) = \Sigma_{i=1}^r (2i) + \Sigma_{i \in I_B} (2i-1) $. 
\item

$ e_{in}(u) \leq \Sigma_{i=1}^{r}e_{in}^i = \Sigma_{i \in I_A} (4i-1) + \Sigma_{i \in I_B} (2i) = 
\Sigma_{i=1}^r (2i) + \Sigma_{i \in I_A} (2i-1) $.
\end{itemize}

The radius of $\ora{G}$ is at most $\max\{e_{out}(u), e_{in}(u)\} \leq \Sigma_{i=1}^r (2i) + \max\{\Sigma_ {i \in I_A} (2i-1), \Sigma_{i \in I_B} (2i-1) \}$. Notice that $\Sigma_{i=1}^r (2i-1) = r^2$ and by Lemma~\ref{oddlemma}, we get $\max\{\Sigma_ {i \in I_A} (2i-1), \Sigma_{i \in I_B} (2i-1) \}$ is at most $\frac{r^2}{2}+1$. Thus the radius of $\ora{G}$ is at most $\Sigma_{i=1}^r (2i) +  (\frac{r^2}{2}+1) \leq (r^2 + r) + (\frac{r^2}{2}+1) = 1.5r^2+r+1$. Hence, the oriented radius of $G$ is at most $1.5r^2+r+1$.  

This proves the main theorem of the section.  

\begin{theorem}\label{ubmmg}
$f(r) \leq 1.5r^2+r+1$. 
\end{theorem}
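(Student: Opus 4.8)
The plan is to treat Theorem~\ref{ubmmg} as the distance accounting that rides on top of the algorithm {\sc StrongOrientation} and its two subroutines. First I would pin down the two asymmetric per-phase guarantees that do all the work: an invocation of {\sc OrientOut} on a center $u_i$ of eccentricity $i$ must orient a subgraph $\ora{H_i}$ with $N[u_i] \subseteq V(\ora{H_i})$, $d_{\ora{H_i}}(u_i,v) \leq 2i$, and $d_{\ora{H_i}}(v,u_i) \leq 4i-1$, while {\sc OrientIn} yields the mirror-image bounds $4i-1$ and $2i$. The containment $N[u_i] \subseteq V(\ora{H_i})$ is the engine of the induction: contracting $V(\ora{H_i})$ to a single vertex $u_{i-1}$ strictly lowers the radius, so the eccentricity parameter walks down from $r$ to $0$ and after $r$ phases every edge of $G$ has been consumed by some edge-disjoint $\ora{H_i}$.

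Next I would set up the accumulation on the final orientation $\ora{G}$, the union of the $\ora{H_i}$. Any target vertex $v$ is first captured in some phase $j$, and a directed path from the global center $u=u_r$ to $v$ can be assembled by descending through the nested contracted centers $u_r, u_{r-1}, \ldots, u_j$ and then following the within-phase path to $v$; charging each hop to the corresponding per-phase out-eccentricity and inducting on the phase index gives $e_{out}(u) \leq \sum_{i=1}^r e_{out}^i$, and symmetrically $e_{in}(u) \leq \sum_{i=1}^r e_{in}^i$. The reason for keeping both subroutines is now visible: a phase contributes the cheap $2i$ in one direction and the expensive $4i-1 = 2i + (2i-1)$ in the other, so both sums share the common term $\sum_{i=1}^r 2i = r^2 + r$ and differ only in which phases donate their odd surplus $2i-1$ to the out-sum rather than the in-sum.

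This is exactly where I would invoke Lemma~\ref{oddlemma}. Choosing, for each $i$, whether phase $i$ runs {\sc OrientOut} or {\sc OrientIn} according to a partition $(A,B)$ of the first $r$ odd numbers $\{2i-1\}$ turns the two surpluses into $\sum_{i \in I_B}(2i-1)$ and $\sum_{i \in I_A}(2i-1)$, whose maximum is minimized precisely when $A$ and $B$ are as balanced as the lemma allows. Since the odd numbers sum to $r^2$, the balanced split caps each surplus at $\tfrac{r^2}{2}+1$, and therefore the radius of $\ora{G}$ is at most $(r^2+r) + (\tfrac{r^2}{2}+1) = 1.5r^2+r+1$, which is the claimed bound.

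The step I expect to be the real obstacle is not the closing arithmetic but establishing the asymmetric {\sc OrientOut} guarantee cleanly, and in particular arguing via Observation~$1$ that every conflicted edge at $u$ (one that cannot be oriented either way without lengthening a shortest cycle through $u$) is automatically swallowed by the outward tree $T_{out}$, so that the reverse tree $T_{in}$ has height only $2r-1$ rather than $2r$. That single saved unit in the in-direction is exactly the surplus $2i-1$ that Lemma~\ref{oddlemma} is designed to balance, so the entire bound hinges on it. Underneath all of this sits Lemma~\ref{tarlemma}, which places every edge on a cycle of length at most $2r+1$ and thereby makes the tree heights, and hence the per-phase distance bounds, valid to begin with.
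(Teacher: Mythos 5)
Your proposal is correct and follows essentially the same route as the paper: the asymmetric per-phase guarantees of {\sc OrientOut}/{\sc OrientIn}, the contraction of $V(\ora{H_i})$ to drive the radius down by one per phase, the decomposition of each phase's cost into a common $2i$ plus an odd surplus $2i-1$, and the balancing of those surpluses via Lemma~\ref{oddlemma} to reach $(r^2+r)+(\tfrac{r^2}{2}+1)=1.5r^2+r+1$. The only negligible slip is that not every edge of $G$ is consumed by some $\ora{H_i}$ — the leftover edges are oriented arbitrarily, which is harmless since the distance bounds are certified entirely within the union of the $\ora{H_i}$.
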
  

It is easy to verify that Algorithm {\sc StrongOrientation} runs in polynomial time. Hence we have a polynomial-time algorithm to find a strong orientation $\ora{G_r}$ with radius at most $1.5 r^2+r+1$ for a mixed graph $G_r$ of radius $r$. Since $r \leq d \leq 2r$, we can immediately see that a mixed graph of diameter $d$ has an orientation of diameter at most $3d^2+2d^2+2$ for all $d$.  
%%%%%%%%%%%%%%%%%%%%%%%%%%%%%%%%%%%%%%%%%%%%%%%%%%%%%%%%%%

\section{Upper Bound for Some Special Classes of Mixed Multigraphs}
\label{secSpecialClass}

Let $G$ be a strongly connected mixed graph with radius $r$. Let $\eta(G)$ denote the smallest integer $k$ such that every edge of $G$ belongs to a cycle of length at most $k$. Huang, Li, Li, and Sun \cite{DBLP:journals/dmtcs/HuangLLS14} proved that the oriented radius of an undirected graph $G$ of radius $r$ is at most $r(\eta(G)-1)$. We generalize this result to mixed graphs. We start by generalizing Lemma~$\ref{oddlemma}$ as follows. 

\begin{restatable}{lemma}{oddevens}
\label{oelemma}
Let set $S_1=\{2i-1 \mid 1\leq i \leq n \}$. Let $S_2$ be a multiset with $k$ elements of the same value: either $2n-1$ or $2n$. Then the multiset $S = S_1 \cup S_2$ can be partitioned into two multisets $A$ and $B$ such that $\lvert \Sigma_{a \in A} a - \Sigma_{b \in B} b \rvert \leq 2$. Moreover, this partition can be generated in polynomial time.    
\end{restatable}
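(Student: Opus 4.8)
The plan is to reduce the statement to the already-established Lemma~\ref{oddlemma} by first disposing of all but at most one copy of the repeated value. Write $m$ for the common value of the $k$ elements of $S_2$, so $m \in \{2n-1,\, 2n\}$, and recall that $\Sigma_{s \in S_1} s = n^2$; throughout, write $\Sigma_T$ for the sum of the elements of a (multi)set $T$. The key observation is that the $k$ identical copies of $m$ can be paired off: assigning one member of each pair to $A$ and the other to $B$ contributes nothing to $\Sigma_A - \Sigma_B$. Forming $\lfloor k/2 \rfloor$ such balanced pairs leaves exactly one unpaired copy of $m$ when $k$ is odd and none when $k$ is even. Thus it suffices to treat the two residual cases $k=0$ and $k=1$.

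When $k$ is even there is no residual copy, and I would simply invoke Lemma~\ref{oddlemma} on $S_1$ itself. Since $S_1$ is the set of the first $n$ odd numbers, that lemma partitions it with discrepancy at most $2$ (and at most $1$ when $n \neq 2$), which together with the balanced pairs yields $\lvert \Sigma_A - \Sigma_B \rvert \le 2$.

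When $k$ is odd, a single copy of $m$ remains to be placed, and here I would peel off the largest odd number $2n-1$ from $S_1$ and handle it together with the residual copy. First apply Lemma~\ref{oddlemma} to $S_1' = \{2i-1 \mid 1 \le i \le n-1\}$, the first $n-1$ odd numbers, obtaining a partition $(A',B')$ with $\Sigma_{A'} \ge \Sigma_{B'}$ and $\Sigma_{A'} - \Sigma_{B'} \le 2$. If $m = 2n-1$, the residual copy has the same value as the removed element $2n-1$, so placing one of these two equal elements on each side contributes $0$ to the discrepancy, leaving the total at $\Sigma_{A'} - \Sigma_{B'} \le 2$. If instead $m = 2n$, I would place the removed $2n-1$ on the heavier side $A'$ and the residual $2n$ on the lighter side $B'$; since $2n-(2n-1)=1$, the discrepancy becomes $(\Sigma_{A'} - \Sigma_{B'}) - 1$, whose absolute value is again at most $2$. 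In every case the balanced pairs contribute nothing, so the overall discrepancy is at most $2$.

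The main point requiring care is the edge case $n-1 = 2$ (that is, $n=3$), where Lemma~\ref{oddlemma} only guarantees discrepancy $2$ rather than $1$ for $S_1'$; I would verify directly that the placements above still land within the bound (for $m=2n$ the subtraction of $1$ drops a discrepancy of $2$ down to $1$), and likewise check the degenerate cases $n=1$ where $S_1'$ is empty and the construction is trivial. Finally, since the pairing is explicit and the procedure makes only a single call to the partition of Lemma~\ref{oddlemma}, which is computable in polynomial time, the entire partition $(A,B)$ is produced in polynomial time.
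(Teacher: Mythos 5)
Your proof is correct and follows essentially the same route as the paper's: handle an even number of repeated copies by splitting them into balanced halves, and for odd $k$ move the element $2n-1$ out of $S_1$ to pair it with the leftover copy (placing the smaller of the two residual elements on the heavier side), then invoke Lemma~\ref{oddlemma} on the remaining odd numbers. Your explicit treatment of the edge cases $n=3$ and $n=1$ is a welcome touch of care, but the underlying decomposition is identical to the paper's.
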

The proof of this lemma is simple and is given in the appendix. 

Let $G$ be a strongly connected bridgeless mixed graph with a vertex $u \in V(G)$ having eccentricity at most $r$ and $\eta(G) = \eta$. 
	By Lemma~\ref{tarlemma}, $\eta \leq 2r+1$. 
	We use the same terminology of $G_i$, $u_i$, and $\ora{H_i}$ as used in Algorithm {\sc StrongOrientation}. 
	Let $\eta_{u_i}$ denote the smallest integer $k$ such that every edge incident with $u_i$ belongs to a cycle of length at most $k$ in $G_i$. 	
	Since $u_i$ has eccentricity at most $i$, by Lemma~\ref{tarlemma}, it is easy to see that, $\eta_{u_i} \leq \min\{\eta, 2i+1\}$. Let $ k_i = \min\{\eta, 2i+1\} $ denote this upper bound.   
	From the details of algorithms {\sc OrientIn} and {\sc OrientOut}, one can see that when $G_i$ and $u_i$ are given as input, the distance guarantees of these algorithms only use the value of $\eta_{u_i} \leq k_i$. 
	Now, a second look at algorithm {\sc OrientIn}, with the definition of $\eta_{u_i}$ in mind, will show that the oriented subgraph $\ora{H_i}$ returned by {\sc OrientIn}$(G_i,u_i)$ has $e_{in}(u_i)$ at most $(k_{i}-1)$ and $e_{out}(u_i)$ at most $(2k_{i}-3)$. 
	Similarly, $\ora{H_i}$ returned by {\sc OrientOut}$(G_i,u_i)$ has $e_{out}(u_i)$ at most $(k_{i}-1)$ and $e_{in}(u_i)$ at most $(2k_{i}-3)$.

A strong orientation $\ora{G}$ of $G$ is obtained by an $r$-phase algorithm similar to {\sc StrongOrientation}, with each phase involving an execution of either {\sc OrientIn} or {\sc OrientOut}. Let $m = \floor{(\eta -1)/2}$. Recall that $2i+1$ dominates $\eta$ for $i > m$. For $1 \leq i \leq r$, let $e_{in}^i= \max\{d_{\ora{H_{i}}}(vu_i) \mid v \in V(\ora{H_i})\}$ denote the in-eccentricity of $u_i$ in $\ora{H_i}$ and $e_{out}^i = \max\{d_{\ora{H_{i}}}(u_iv) \mid v \in V(\ora{H_i})\}$ denote the out-eccentricity of $u_i$ in $\ora{H_i}$.
Let $S$ be the multiset $ \{k_i-2  \mid 1 \leq i \leq r \}$. It can be verified that $S$ satisfies the requirements of Lemma~\ref{oelemma}. Hence, we can obtain a partition $(A,B)$ of $S$ such that their sums differ at most by $2$.
 Let $\sigma = (\sigma_1,  \ldots , \sigma_r)$ be a non-decreasing ordering of elements in $S$. For $1\leq i \leq r $, let $i \in I_A$ if and only if $\sigma_i \in A$ and $i \in I_B$ otherwise. In Phase~$i$, we apply  {\sc OrientOut} if $i \in I_A$ and {\sc OrientIn} if $i \in I_B$. By combining the guarantees given by {\sc OrientOut} and {\sc OrientIn} for the in-distance and out-distance in $\ora{H_i}$, we can have the following bounds for $e_{in}(u)$ and $e_{out}(u)$ of $\ora{G}$.     

\begin{align*}
e_{in}(u) & \leq \Sigma_{i=1}^{r}e_{in}^i \\ & \leq \Sigma_{i \in I_A} (2k_i-3) + \Sigma_{i \in I_B}(k_i-1) \\ & =
\Sigma_{i=1}^r (k_i-1) + \Sigma_{i \in I_A} (k_i-2) \\ & =:k_{in}
\end{align*}
\begin{align*}
e_{out}(u)  & \leq \Sigma_{i=1}^{r}e_{out}^i \\ & \leq \Sigma_{i \in I_A} (k_i-1) + \Sigma_{i \in I_B} (2k_i-3) \\ &=
\Sigma_{i=1}^r (k_i-1) + \Sigma_{i \in I_B} (k_i-2) \\ & =: k_{out}
\end{align*}
Further, we have the following bound for $\lvert k_{in} - k_{out} \rvert$ and $(k_{in} + k_{out})$.

\begin{align}
 \lvert k_{in} - k_{out} \rvert &=
\lvert \Sigma_{i \in I_A} (k_i-2) - \Sigma_{i \in I_B} (k_i-2) \rvert \nonumber \\ & \leq 2 ~~~~~(\text{Lemma}~\ref{oelemma}) 
\end{align} 
\begin{align}
(k_{in} + k_{out})  & = 
2 \Sigma_{i=1}^r (k_i-1) + \Sigma_{i=1}^r (k_i-2) \nonumber \\&=
(3 \Sigma_{i=1}^r k_i)-4r \nonumber \\&=
3\left[ (\Sigma_{i=1}^{m} 2i+1) + (\Sigma_{i=m+1}^{r} \eta ) \right] -4r \nonumber ~~~~~(\text{By definition of }k_i) \\&= 
3\left[ (\Sigma_{i=1}^{r} \eta) - \Sigma_{i=1}^{m} (\eta-(2i+1)) \right] -4r \nonumber \\&= 3r \eta - 3\Sigma_{i=1}^{m} (\eta-(2i+1)) -4r
\end{align} 
 
Notice that, $\eta \in \{2m+1,2m+2\}$. Now consider the term $\Sigma_{i=1}^{m} (\eta-(2i+1))$ in Equation~$2$. Notice that, when $\eta$ is even, this term sums up odd numbers from $1$ to $\eta -3$ and when $\eta$ is odd, the term sums up even numbers from $0$ to $\eta -3$. 
Hence, by taking the worst case for Equation~$2$, we get.

\begin{align}
(k_{in} + k_{out})  \leq 
3 r \eta - \frac{3}{4}(\eta-1)(\eta-3) -4r 
\end{align} 

From equations $1$ and $3$, we get.
\begin{align}
\max\{e_{in}(u), e_{out}(u)\} &\leq \max\{k_{in}, k_{out}\} \nonumber \\ & \leq \floor{\frac{k_{in}+k_{out}}{2}}+1 \nonumber \\ & \leq 1.5 r \eta - 0.375(\eta-1)(\eta-3)-2r +1
\end{align}

Since $\eta \geq 3$, $(\eta-1)(\eta-3)$ is non-negative. Hence, by Equation $4$, we have the following theorem.

\begin{theorem}\label{etatheorem}
Let $G$ be a strongly connected bridgeless mixed multigraph of radius $r$ and let $\eta(G)=\eta$ denote the smallest integer $k$ such that every edge of $G$ belongs to a cycle of length at most $k$. Then $G$ has oriented radius at most $1.5 r \eta - 0.375(\eta-1)(\eta-3)-2r +1 \leq 1.5 r \eta $.  
\end{theorem}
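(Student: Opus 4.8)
The plan is to reuse the phase-structured scheme of Algorithm {\sc StrongOrientation}, but to replace the coarse per-phase distance guarantees ($2i$ and $4i-1$) with the sharper bounds that {\sc OrientOut} and {\sc OrientIn} actually achieve once we track the \emph{local} cycle length at $u_i$ rather than the generic $2i+1$ from Lemma~\ref{tarlemma}. First I would record that, since $u_i$ has eccentricity at most $i$, every edge incident with $u_i$ lies on a cycle of length at most $k_i = \min\{\eta,2i+1\}$ in $G_i$, and then re-audit the two subroutines to confirm that their guarantees scale with $k_i$: {\sc OrientOut}$(G_i,u_i)$ yields $e_{out}(u_i) \le k_i-1$ and $e_{in}(u_i) \le 2k_i-3$, and {\sc OrientIn}$(G_i,u_i)$ the symmetric pair. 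This re-audit is mechanical, since every distance estimate in those algorithms was derived from the length of the shortest cycle through the relevant edge.

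The crucial step is the balancing. As in {\sc StrongOrientation}, for each phase I would choose which subroutine to run so as to minimise $\max\{e_{in}(u),e_{out}(u)\}$. Writing $k_{in} = \sum_{i=1}^r (k_i-1) + \sum_{i\in I_A}(k_i-2)$ and $k_{out} = \sum_{i=1}^r (k_i-1) + \sum_{i\in I_B}(k_i-2)$, the two eccentricities share the common term $\sum_i(k_i-1)$ and differ only through $\sum_{I_A}(k_i-2)$ versus $\sum_{I_B}(k_i-2)$. So the task reduces to splitting the multiset $S=\{k_i-2 : 1\le i\le r\}$ into parts of nearly equal sum. Here is where Lemma~\ref{oelemma} replaces Lemma~\ref{oddlemma}: setting $m=\floor{(\eta-1)/2}$, one checks that for $i\le m$ we have $k_i-2=2i-1$, while for $i>m$ we have $k_i-2=\eta-2\in\{2m-1,2m\}$, so $S$ is exactly $\{2i-1:1\le i\le m\}$ together with several copies of a single top value, which is precisely the hypothesis of Lemma~\ref{oelemma}. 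Hence $\lvert k_{in}-k_{out}\rvert\le 2$.

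With the balance in hand I would finish by the maneuver $\max\{k_{in},k_{out\}}\le \floor{(k_{in}+k_{out})/2}+1$, valid exactly because the gap is at most $2$, and then evaluate $k_{in}+k_{out}=3\sum_{i=1}^r k_i-4r$. Splitting $\sum k_i$ at the threshold $m$ gives $\sum_{i\le m}(2i+1)+(r-m)\eta$, which I would rewrite as $r\eta-\sum_{i=1}^m(\eta-(2i+1))$; the correction term is a sum of consecutive odd (when $\eta$ is even) or even (when $\eta$ is odd) numbers and is worst-cased, i.e. minimised, at $\tfrac14(\eta-1)(\eta-3)$. Substituting produces Equation~$4$, and since $(\eta-1)(\eta-3)\ge 0$ for $\eta\ge 3$ the correction only helps, leaving the clean bound $1.5r\eta$.

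The main obstacle is not a single hard idea but the bookkeeping around the threshold $m$, where three things must simultaneously line up: that the re-audited per-phase bounds are indeed $(k_i-1)$ and $(2k_i-3)$ rather than the naive $2i,\,4i-1$; that the multiset $\{k_i-2\}$ lands in exactly the form Lemma~\ref{oelemma} can partition; and that the parity-dependent worst case of the correction term is computed correctly. Each piece is routine, but they must fit together so that the two balanced sums and the closed-form evaluation of $\sum k_i$ agree. The payoff of getting this right is conceptually clean: capping each phase's cycle length at $\eta$, instead of letting it grow like $2i+1$, is precisely what converts the $r^2$ behaviour of the general bound into the $r\eta$ behaviour claimed here.
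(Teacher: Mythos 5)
Your proposal follows the paper's proof essentially step for step: the same per-phase refinement of the {\sc OrientOut}/{\sc OrientIn} guarantees to $k_i-1$ and $2k_i-3$ with $k_i=\min\{\eta,2i+1\}$, the same reduction of the balancing problem to partitioning the multiset $\{k_i-2\}$ via Lemma~\ref{oelemma}, and the same closed-form evaluation of $k_{in}+k_{out}$ with the parity-dependent correction term minimised at $\tfrac14(\eta-1)(\eta-3)$. It is correct, and your explicit verification that $\{k_i-2\}$ equals $\{2i-1: 1\le i\le m\}$ plus copies of $\eta-2\in\{2m-1,2m\}$ is a welcome elaboration of a step the paper leaves to the reader.
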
  

A $k$-chordal mixed multigraph is a mixed multigraph graph in which all cycles of length more than $k$ have a chord. In particular when $k = 3$, the graph is called a chordal mixed multigraph. Since $\eta \leq k$, by Theorem~\ref{etatheorem}, we have the following corollary.   

\begin{corollary}\label{kcorollary}  
Let $G$ be a strongly connected bridgeless $k$-chordal mixed multigraph with radius $r$. Then $G$ has oriented radius at most $1.5 r k - 0.375(k-1)(k-3)-2r +1$. In particular, if $G$ is a strongly connected bridgeless chordal mixed multigraph of radius $r$ then $G$ has oriented radius at most $2.5r+1$.  
\end{corollary}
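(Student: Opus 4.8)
The plan is to deduce Corollary~\ref{kcorollary} from Theorem~\ref{etatheorem} by bounding the cycle parameter, namely by showing that every strongly connected bridgeless $k$-chordal mixed multigraph $G$ has $\eta(G) \le k$. Granting this, the general bound is immediate: substitute $\eta \le k$ into Theorem~\ref{etatheorem}, and for the chordal case set $k = 3$, where the correction $0.375(k-1)(k-3)$ vanishes and the expression collapses to $1.5r\cdot 3 - 2r + 1 = 2.5r+1$. So the entire content of the corollary rests on the single inequality $\eta(G) \le k$.

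To establish $\eta(G)\le k$ I would argue by a shortening induction on cycle length. Fix an edge $e$ of $G$. By Lemma~\ref{tarlemma} the edge $e$ lies on some cycle (of length at most $2r+1$), so I may take a \emph{shortest} cycle $C$ through $e$ and assume, for contradiction, that $|C|>k$. By $k$-chordality $C$ has a chord $f$ joining two vertices $a,b$ that are non-consecutive along $C$; this chord splits the vertex sequence of $C$ into two arcs, exactly one of which carries $e$. The goal is to combine $f$ with the arc \emph{not} containing $e$, traversed consistently with the orientations, into a strictly shorter cycle that still passes through $e$, contradicting the minimality of $C$.

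I expect the orientation bookkeeping in this last step to be the main obstacle. If $f$ is undirected the argument is routine: either arc closes up with $f$ into a valid cycle, we keep the one carrying $e$, and it is strictly shorter than $C$. The genuine difficulty is a directed chord $a\to b$, since only one of the two candidate sub-cycles respects the edge directions, and that sub-cycle may be precisely the one that shortcuts past $e$ rather than the one retaining it. Resolving this cleanly is the crux; I would attempt it by choosing, among all chords of $C$, one whose orientation is compatible with keeping $e$, using strong connectivity and bridgelessness to exclude the degenerate situation in which every chord bypasses $e$. Finally, to pass from $\eta$ to $k$ in the stated bound one needs monotonicity: writing $\phi(t)=1.5rt-0.375(t-1)(t-3)-2r+1$, its derivative $1.5r-0.75t+1.5$ is positive on $[3,2r+1]$, so $\eta\le k$ together with the bound $\eta\le 2r+1$ from Lemma~\ref{tarlemma} gives oriented radius $\le\phi(\eta)\le\phi(k)$ throughout the relevant range $k\le 2r+1$, with the chordal case $k=3$ unconditional.
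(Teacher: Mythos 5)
Your overall route is exactly the paper's: the corollary is obtained by substituting $\eta \le k$ into Theorem~\ref{etatheorem} and setting $k=3$ for the chordal case, and the paper supplies nothing beyond the bare assertion ``Since $\eta \le k$''. So, as you correctly observe, the entire content of the corollary rests on that one inequality. Your monotonicity check of $\phi(t)=1.5rt-0.375(t-1)(t-3)-2r+1$ on the range $t\le 2r+1$ is correct and is actually more careful than the paper, which silently substitutes $k$ for $\eta$ even though $\phi$ is a downward parabola and the substitution is only valid while $k$ stays below its vertex at $t=2r+2$.

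However, the step you flag as the crux is a genuine gap, and the repair you sketch cannot be carried out. In the shortening argument, when the chord $f$ of the shortest cycle $C$ through $e$ is a directed edge $a\to b$, only one of the two sub-closed-walks of $C$ is a consistent cycle, and it may be the one avoiding $e$; you propose to escape by choosing a different chord whose orientation is compatible with keeping $e$, invoking strong connectivity and bridgelessness to rule out the bad case. But the bad case occurs: take the directed cycle $v_1\to v_2\to\cdots\to v_6\to v_1$ together with the single directed chord $v_1\to v_4$. This mixed multigraph is strongly connected and bridgeless, its only cycles are the $6$-cycle and the $4$-cycle $v_1 v_4 v_5 v_6$, so every cycle of length more than $5$ has a chord ($k$-chordal for $k=5$, and for $k=4$ if chords are read in the underlying undirected graph), yet the edge $v_1v_2$ lies on no cycle shorter than $6$, so $\eta=6>k$. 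There is no alternative chord to choose. Thus $\eta(G)\le k$ is false for mixed multigraphs under the natural reading in which a chord may be directed; the classical undirected argument (where either arc of $C$ closes up with the chord) is the only setting in which your induction goes through. To complete the corollary one must either restrict the notion of $k$-chordality so that directed chords of this kind are excluded, or prove $\eta\le k$ by an entirely different argument; as written, your proof is incomplete at exactly the point you identify, and the paper offers no help since it omits the justification altogether.
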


%%%%%%%%%%%%%%%%%%%%%%%%%%%%%%%%%%%%%%%%%%%%%%%%%%%%%%%%%%

\section{Lower Bound}
\label{secLowerBound}

\begin{figure}[h]
    
    \centering
   
\begin{tikzpicture}[scale=0.28]
\begin{scope}[every node/.style={sloped,allow upside down}]
\draw (0,2) circle (0.3cm); %a

\draw (0,6) circle (0.3cm); % a'

\draw (2,6) circle (0.3cm);
\draw (4,6) circle (0.3cm);
\draw (6,6) circle (0.3cm);
\draw (8,6) circle (0.3cm);
\draw (10,6) circle (0.3cm); % Level 1 Right

\node [label={[xshift=0.5cm, yshift=0.18cm]$a$}]{};
\node [label={[xshift=-0.044cm, yshift=1.6cm]$c$}]{};
\node [label={[xshift=-3.2cm, yshift=1.2cm]$b$}]{};
\node [label={[xshift=3.2cm, yshift=1.2cm]$d$}]{};
\node [label={[xshift=-5.4cm, yshift=2.68cm]$f$}]{};
\node [label={[xshift=5.4cm, yshift=2.68cm]$g$}]{};

\draw (-2,6) circle (0.3cm);
\draw (-4,6) circle (0.3cm);
\draw (-6,6) circle (0.3cm);
\draw (-8,6) circle (0.3cm);
\draw (-10,6) circle (0.3cm); % Level 1 Left

\draw (10,9) circle (0.3cm);

\draw (12,9) circle (0.3cm);
\draw (14,9) circle (0.3cm);
\draw (16,9) circle (0.3cm);
\draw (8,9) circle (0.3cm);
\draw (6,9) circle (0.3cm);
\draw (4,9) circle (0.3cm);

\draw (-10,9) circle (0.3cm);

\draw (-12,9) circle (0.3cm);
\draw (-14,9) circle (0.3cm);
\draw (-16,9) circle (0.3cm);
\draw (-8,9) circle (0.3cm);
\draw (-6,9) circle (0.3cm);
\draw (-4,9) circle (0.3cm);

\draw (16,11.5) circle (0.3cm);

\draw (18,11.5) circle (0.3cm);
\draw (14,11.5) circle (0.3cm);

\draw (-4,11.5) circle (0.3cm);

\draw (-2,11.5) circle (0.3cm);
\draw (-6,11.5) circle (0.3cm);

\draw (4,11.5) circle (0.3cm);

\draw (2,11.5) circle (0.3cm);
\draw (6,11.5) circle (0.3cm);

\draw (-16,11.5) circle (0.3cm);

\draw (-18,11.5) circle (0.3cm);
\draw (-14,11.5) circle (0.3cm);

\draw (-17.80,11.5)--(-16.20,11.5);
\draw (-15.80,11.5)--(-14.20,11.5);

\draw (-5.80,11.5)--(-4.20,11.5);
\draw (-3.80,11.5)--(-2.20,11.5);

\draw (2.20,11.5)--(3.80,11.5);
\draw (4.20,11.5)--(5.80,11.5);

\draw (14.20,11.5)--(15.80,11.5);
\draw (16.20,11.5)--(17.80,11.5);

\draw (-17.20,11.5)--node {\midarrowa}(-16.20,11.5);
\draw (-15.20,11.5)--node {\midarrowa}(-14.20,11.5);

\draw (-5.20,11.5)--node {\midarrowa}(-4.20,11.5);
\draw (-3.20,11.5)--node {\midarrowa}(-2.20,11.5);

\draw (2.80,11.5)--node {\midarrowa}(3.80,11.5);
\draw (4.80,11.5)--node {\midarrowa}(5.80,11.5);

\draw (14.80,11.5)--node {\midarrowa}(15.80,11.5);
\draw (16.80,11.5)--node {\midarrowa}(17.80,11.5);

\draw (-15.80,9)--(-14.20,9);
\draw (-13.80,9)--(-12.20,9);
\draw (-13.20,9)--node {\midarrowa}(-12.20,9);
\draw (-11.80,9)--(-10.20,9);
\draw (-9.80,9)--(-8.20,9);
\draw (-7.80,9)--(-6.20,9);
\draw (-7.20,9)--node {\midarrowa}(-6.20,9);
\draw (-5.80,9)--(-4.20,9);

\draw (4.20,9)--(5.80,9);
\draw (6.20,9)--(7.80,9);
\draw (6.80,9)--node {\midarrowa}(7.80,9);
\draw (8.20,9)--(9.80,9);
\draw (10.20,9)--(11.80,9);
\draw (12.20,9)--(13.80,9);
\draw (12.80,9)--node {\midarrowa}(13.80,9);
\draw (14.20,9)--(15.80,9);

\draw (-9.80,6)--(-8.20,6);
\draw (-7.80,6)--(-6.20,6);
\draw (-5.80,6)--(-4.20,6);
\draw (-5.20,6)--node {\midarrowa}(-4.20,6);
\draw (-3.80,6)--(-2.20,6);
\draw (-1.80,6)--(-0.20,6);
\draw (0.20,6)--(1.80,6);
\draw (2.20,6)--(3.80,6);
\draw (4.20,6)--(5.80,6);
\draw (4.80,6)--node {\midarrowa}(5.80,6);
\draw (6.20,6)--(7.80,6);
\draw (8.20,6)--(9.80,6);

\draw (0,2.2)--(0,5.8);

\draw (10,6.2)--(10,8.8);
\draw (-10,6.2)--(-10,8.8);

\draw (16,9.2)--(16,11.3);
\draw (-16,9.2)--(-16,11.3);
\draw (4,9.2)--(4,11.3);
\draw (-4,9.2)--(-4,11.3);

\draw (-16,9.2)--(-18,11.3);
\draw (-4,9.2)--(-6,11.3);
\draw (-16,9.2)--(-14,11.3);
\draw (-4,9.2)--(-2,11.3);

\draw (16,9.2)--(18,11.3);
\draw (4,9.2)--(6,11.3);
\draw (16,9.2)--(14,11.3);
\draw (4,9.2)--(2,11.3);

\draw (10,6.2)--(16,8.8);
\draw (10,6.2)--(4,8.8);

\draw (-10,6.2)--(-16,8.8);
\draw (-10,6.2)--(-4,8.8);

\draw (0,2.2)--(10,5.8);
\draw (0,2.2)--(-10,5.8);

%%%%%%%%%%%%%%%%%%%%%%%%%%%%%%%%%%%%%%%%%%%%%%%%

\draw (0,-2) circle (0.3cm);

\draw (2,-2) circle (0.3cm);
\draw (4,-2) circle (0.3cm);
\draw (6,-2) circle (0.3cm);
\draw (8,-2) circle (0.3cm);
\draw (10,-2) circle (0.3cm);

\draw (-2,-2) circle (0.3cm);
\draw (-4,-2) circle (0.3cm);
\draw (-6,-2) circle (0.3cm);
\draw (-8,-2) circle (0.3cm);
\draw (-10,-2) circle (0.3cm);

\draw (10,-5) circle (0.3cm);

\draw (12,-5) circle (0.3cm);
\draw (14,-5) circle (0.3cm);
\draw (16,-5) circle (0.3cm);
\draw (8,-5) circle (0.3cm);
\draw (6,-5) circle (0.3cm);
\draw (4,-5) circle (0.3cm);

\draw (-10,-5) circle (0.3cm);

\draw (-12,-5) circle (0.3cm);
\draw (-14,-5) circle (0.3cm);
\draw (-16,-5) circle (0.3cm);
\draw (-8,-5) circle (0.3cm);
\draw (-6,-5) circle (0.3cm);
\draw (-4,-5) circle (0.3cm);

\draw (16,-7.5) circle (0.3cm);

\draw (18,-7.5) circle (0.3cm);
\draw (14,-7.5) circle (0.3cm);

\draw (-4,-7.5) circle (0.3cm);

\draw (-2,-7.5) circle (0.3cm);
\draw (-6,-7.5) circle (0.3cm);

\draw (4,-7.5) circle (0.3cm);

\draw (2,-7.5) circle (0.3cm);
\draw (6,-7.5) circle (0.3cm);

\draw (-16,-7.5) circle (0.3cm);

\draw (-18,-7.5) circle (0.3cm);
\draw (-14,-7.5) circle (0.3cm);

\draw (-17.80,-7.5)--(-16.20,-7.5);
\draw (-15.80,-7.5)--(-14.20,-7.5);

\draw (-5.80,-7.5)--(-4.20,-7.5);
\draw (-3.80,-7.5)--(-2.20,-7.5);

\draw (2.20,-7.5)--(3.80,-7.5);
\draw (4.20,-7.5)--(5.80,-7.5);

\draw (14.20,-7.5)--(15.80,-7.5);
\draw (16.20,-7.5)--(17.80,-7.5);

\draw (-17.20,-7.5)--node {\midarrowa}(-16.20,-7.5);
\draw (-15.20,-7.5)--node {\midarrowa}(-14.20,-7.5);

\draw (-5.20,-7.5)--node {\midarrowa}(-4.20,-7.5);
\draw (-3.20,-7.5)--node {\midarrowa}(-2.20,-7.5);

\draw (2.80,-7.5)--node {\midarrowa}(3.80,-7.5);
\draw (4.80,-7.5)--node {\midarrowa}(5.80,-7.5);

\draw (14.80,-7.5)--node {\midarrowa}(15.80,-7.5);
\draw (16.80,-7.5)--node {\midarrowa}(17.80,-7.5);

\draw (-15.80,-5)--(-14.20,-5);
\draw (-13.80,-5)--(-12.20,-5);
\draw (-13.20,-5)--node {\midarrowa}(-12.20,-5);
\draw (-11.80,-5)--(-10.20,-5);
\draw (-9.80,-5)--(-8.20,-5);
\draw (-7.80,-5)--(-6.20,-5);
\draw (-7.20,-5)--node {\midarrowa}(-6.20,-5);
\draw (-5.80,-5)--(-4.20,-5);

\draw (4.20,-5)--(5.80,-5);
\draw (6.20,-5)--(7.80,-5);
\draw (6.80,-5)--node {\midarrowa}(7.80,-5);
\draw (8.20,-5)--(9.80,-5);
\draw (10.20,-5)--(11.80,-5);
\draw (12.20,-5)--(13.80,-5);
\draw (12.80,-5)--node {\midarrowa}(13.80,-5);
\draw (14.20,-5)--(15.80,-5);

\draw (-9.80,-2)--(-8.20,-2);
\draw (-7.80,-2)--(-6.20,-2);
\draw (-5.80,-2)--(-4.20,-2);
\draw (-5.20,-2)--node {\midarrowa}(-4.20,-2);
\draw (-3.80,-2)--(-2.20,-2);
\draw (-1.80,-2)--(-0.20,-2);
\draw (0.20,-2)--(1.80,-2);
\draw (2.20,-2)--(3.80,-2);
\draw (4.20,-2)--(5.80,-2);
\draw (4.80,-2)--node {\midarrowa}(5.80,-2);
\draw (6.20,-2)--(7.80,-2);
\draw (8.20,-2)--(9.80,-2);

\draw (0,1.8)--(0,-1.8);

\draw (10,-2.2)--(10,-4.8);
\draw (-10,-2.2)--(-10,-4.8);

\draw (16,-5.2)--(16,-7.3);
\draw (-16,-5.2)--(-16,-7.3);
\draw (4,-5.2)--(4,-7.3);
\draw (-4,-5.2)--(-4,-7.3);

\draw (-16,-5.2)--(-18,-7.3);
\draw (-4,-5.2)--(-6,-7.3);
\draw (-16,-5.2)--(-14,-7.3);
\draw (-4,-5.2)--(-2,-7.3);

\draw (16,-5.2)--(18,-7.3);
\draw (4,-5.2)--(6,-7.3);
\draw (16,-5.2)--(14,-7.3);
\draw (4,-5.2)--(2,-7.3);

\draw (10,-2.2)--(16,-4.8);
\draw (10,-2.2)--(4,-4.8);

\draw (-10,-2.2)--(-16,-4.8);
\draw (-10,-2.2)--(-4,-4.8);

\draw (0,1.8)--(10,-1.8);
\draw (0,1.8)--(-10,-1.8);

\end{scope}
\end{tikzpicture}

    \caption{Mixed multigraph $G_3$ of radius $3$ and oriented radius $17$.}
    \label{fig-mmg}
\end{figure}

When we restrict to undirected graphs, the currently known lower bound for $f(r)$ is $r^2+r$. This follows from a construction by Chv{\'a}tal and Thomassen \cite{chvatal1978distances}, an instance of which is shown in Figure~\ref{fig:fig1}. Using a similar idea, we construct a mixed graph $G_r$ of radius $r$ whose oriented radius is $r^2 + 3r - 1$. In order to construct $G_r$, we first define a ternary tree $T_r$ of height $r$, rooted at a vertex $a$. Each internal vertex $v$ of $T_r$ has $3$ children, called the \emph{left}, \emph{middle}, and  \emph{right} child of $v$. The \emph{Level} of a vertex is its distance from $a$ in $T_r$. For a vertex $v$ in Level $i$, for $0 \leq i \leq r-2$, the middle child of $v$ is a leaf in $T_r$ and the other two children are internal vertices. Let $b$, $c$, and $d$ be the left, middle, and right children, respectively, of $a$. Let $f$ be the leaf in Level~$r$ of $T_r$ such that every vertex in the $a-f$ path other than $a$ is a left child of its parent. Let $g$ be the leaf in Level~$r$ of $T_r$ such that every vertex in the $a-g$ path other than $a$ is a right child of its parent. We construct a mixed graph $H$ from $T_r$ as follows. For each Level~$i$, for $i=0$ to $r-1$ and for each internal vertex $p_i$ in Level $i$, with $l_{i+1}$, $m_{i+1}$, $r_{i+1}$, respectively, being the left, right, and middle children of $p_i$, we do the following. Add paths of length $2r-2i-1$ from $l_{i+1}$ to $m_{i+1}$ and  $m_{i+1}$ to $r_{i+1}$. The middle edge of the first path is oriented along the direction from $l_{i+1}$ to $m_{i+1}$ and the middle edge of the second path is oriented along the direction from $m_{i+1}$ to $r_{i+1}$. Notice that, due to the presence of these directed edges, in any strong orientation of $H$, directions of all the edges except the edges between the middle children and their parents are forced. But we leave them unoriented for now. Now, the mixed graph $G_r$ is constructed by taking two copies of $H$, $H_1$ and $H_2$ and identifying the root $a$ of $H_1$ and $H_2$. The vertices of $H_1$ retain their labels from $H$. We can easily see that $G_r$ is strongly connected with radius $r$ and $a$ is the unique central vertex. Figure~\ref{fig-mmg} shows the mixed multigraph $G_3$.  

It can be verified that in any strong orientation $\ora{G_r}$ that minimizes the radius, $a$ is the unique central vertex. Hence, there will be an optimum orientation in which the orientation of $H_1$ is symmetric with the orientation of $H_2$. We have already seen that edges from a middle child to its parent are the only edges in $G_r$ whose orientation is not forced in a strong orientation. Hence, in any strong orientation $\ora{G_r}$ of $G_r$ with an induced orientation $\ora{H_1}$ of $H_1$, we have the following bounds for every internal vertex $p_i$ of $T_r$ at Level~$i$ with $l_{i+1}$ and $r_{i+1}$ as the left and right children, respectively, for $0 \leq i \leq r-1$. 
\begin{align}
d_{\ora{H_1}}(l_{i+1},p_{i}) & \in \{ 2(r-i), 4(r-i)-1 \} \nonumber \\
d_{\ora{H_1}}(p_{i},r_{i+1}) & \in \{ 2(r-i), 4(r-i)-1 \}  \nonumber \\
d_{\ora{H_1}}(p_{i},l_{i+1}) & = 1 \nonumber \\
d_{\ora{H_1}}(r_{i+1},p_{i}) & = 1 
\end{align} 

From the first line of Equation~$5$, it follows that,  $d_{\ora{H_1}}(f,b) \geq  \Sigma_{i=1}^{r-1} 2(r-i) =r^2-r$. Similarly, from the second line of Equation~$5$, it follows that, $d_{\ora{H_1}}(d,g) \geq  \Sigma_{i=1}^{r-1} 2(r-i) =r^2-r$. There are only two possible orientations for the edge $ac$. If the edge $ac$ in $H_1$ is oriented from $a$ to $c$, then $d_{\ora{H_1}}(b,a) = 4r-1$. Hence $d_{\ora{H_1}}(f,a) =  d_{\ora{H_1}}(f,b) + d_{\ora{H_1}}(b,a) \geq (r^2-r) + (4r-1) = r^2+3r-1$. Similarly, if the edge $ac$ in $\ora{H_1}$ is oriented from $c$ to $a$, we can argue that  $d_{\ora{H_1}}(a,g) = d_{\ora{H_1}}(a,d) + d_{\ora{H_1}}(d,g) \geq (4r-1)+(r^2-r) = r^2+3r-1$. From this, it follows that, $G_r$ has oriented radius at least $r^2+3r-1$. 

Now, we show that there is an orientation $\ora{G_r}$ with radius $r^2+3r-1$. Recall that all the edges except the ones between the middle children and their parents are forced in any strong orientation $\ora{H_1}$ of $H_1$. First, we orient all the edges whose orientation is forced. Now, the only edges that remain to be oriented are those which go from a parent to its middle child. Among them, the edge $ac$ of $H_1$ is oriented from $a$ to $c$. Every edge from a vertex $y$ to its middle child is oriented away from $y$ if the edge from $y$ to its parent is oriented towards $y$ and vice-versa. This completes the orientation of $\ora{G_r}$.  

Let $v_r$ be a vertex with maximum in-distance to $a$. It can be verified that $v_r$ is a left or right child in Level~$r$ of $T_r$. Let $P$ be a shortest path from $v_r$ to $a$ in $\ora{H_1}$ and let $v_r, v_{r-1}, \cdots, v_0=a$ be a subsequence of $P$ such that for $1 \leq i \leq r$, $v_{i-1}$ is the parent of $v_i$ in $T_r$. Let $d_i$ denote the distance from $v_{i}$ to $v_{i-1}$. By Equation~$5$, $d_i \in \{1, 2(r-i+1), 4(r-i+1)-1\}$. We can see that $d_{\ora{H_1}}(v_r,a)= d_r + d_{r-1} + \cdots + d_1$. We have already seen that $d_i \leq 4(r-i+1)-1$. 
Further, if $d_i = 4(r-i+1)-1$, then the edge between $v_{i-1}$ and its middle child is oriented away from $v_{i-1}$. Hence, the edge between $v_{i-1}$ and its parent $v_{i-2}$ is oriented away from $v_{i-1}$ and hence $d_{i-1} = 1$, for $2 \leq i \leq r $. Hence, for $2 \leq j \leq r-1$, it can be verified using induction on $(r-j)$ that, if $\Sigma_{i=j}^r d_i > \Sigma_{i=j}^r  2(r-i+1)$, then $\Sigma_{i=j-1}^{r} d_i \leq \Sigma_{i=j-1}^{r} 2(r-i+1)$. Choosing $j =2$, we see that, either $d_2 + \cdots + d_r \leq  \Sigma_{i=2}^{r} 2(r-i+1) = r^2-r$ or $d_1 + d_2 + \cdots + d_r \leq \Sigma_{i=1}^{r} 2(r-i+1) = r^2+r \leq r^2 + 3r -1$. Since $d_1 \leq 4r - 1$,  in the former case too, $d_1 + d_2 + \cdots + d_r \leq r^2 + 3r - 1$. Thus, $d_{\ora{H_1}}(v_r,a) \leq r^2+3r-1$. Hence, $\forall v \in V(H_1)$, $ d_{\ora{H_1}}(v,a) \leq r^2+3r-1$. Similarly, we can prove that $\forall v \in V(H_1)$, $d_{\ora{H_1}}(a,v) \leq r^2+3r-1$. Since $H_1$ and $H_2$ are isomorphic, there exists a strong orientation $\ora{G_r}$ of $G_r$ with radius at most $r^2+3r-1$. Since we have already shown that $G_r$ has oriented radius at least $r^2+3r-1$, the oriented radius of $G_r$ is $r^2+3r-1$.  

Now the main theorem of the section follows.               

\begin{theorem}\label{lbmmg}
$f(r) \geq r^2+3r-1$. 
\end{theorem}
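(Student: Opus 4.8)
The plan is to establish Theorem~\ref{lbmmg} by exhibiting, for each $r \in \mathbb{N}$, a single mixed multigraph $G_r$ of radius exactly $r$ whose oriented radius is exactly $r^2 + 3r - 1$; since $f(r)$ is the smallest number that works for \emph{all} $2$-edge connected mixed graphs of radius $r$, the mere existence of one such $G_r$ forces $f(r) \geq r^2 + 3r - 1$. Thus the entire burden is to construct $G_r$ and to pin down its oriented radius with matching lower and upper bounds. I would take as the building block a ternary tree $T_r$ of height $r$ rooted at $a$, in which every internal vertex has a \emph{left}, \emph{middle}, and \emph{right} child, with the convention that middle children are always leaves and the left/right children continue the recursion. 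The key device is to replace each edge from an internal vertex $p_i$ at Level~$i$ to its left and right subtrees by \emph{paths of length $2r-2i-1$} with a single forced directed edge in the middle (oriented left-to-middle on one side, middle-to-right on the other). This forcing is the heart of the construction: because a strong orientation must reach and return through these directed middle edges, the direction of \emph{every} edge in $G_r$ becomes forced except the edges joining middle children to their parents, and traversing such a path ``against the grain'' costs roughly twice as much as traversing it ``with the grain.'' Doubling the tree by gluing two copies $H_1, H_2$ at the root $a$ makes $a$ the unique central vertex and symmetrizes the analysis.

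With the construction fixed, I would first record the local distance dictionary of Equation~$5$: for each internal $p_i$ with left child $l_{i+1}$ and right child $r_{i+1}$, one has $d_{\ora{H_1}}(p_i, l_{i+1}) = d_{\ora{H_1}}(r_{i+1}, p_i) = 1$ (the cheap direction), while $d_{\ora{H_1}}(l_{i+1}, p_i)$ and $d_{\ora{H_1}}(p_i, r_{i+1})$ each lie in $\{2(r-i),\, 4(r-i)-1\}$ depending on how the one free edge at $p_i$ is oriented. Summing the cheap-direction penalties along the all-left path to the deep leaf $f$ gives $d_{\ora{H_1}}(f,b) \geq \sum_{i=1}^{r-1} 2(r-i) = r^2 - r$, and symmetrically $d_{\ora{H_1}}(d,g) \geq r^2 - r$. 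The final free edge is $ac$: whichever way it points, one of the two chains is forced to pay the expensive $4r-1$ at the top, yielding either $d_{\ora{H_1}}(f,a) \geq (r^2-r)+(4r-1) = r^2+3r-1$ or $d_{\ora{H_1}}(a,g) \geq r^2+3r-1$. This establishes the lower bound on the oriented radius.

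For the matching upper bound I would exhibit an explicit orientation: orient $ac$ from $a$ to $c$, orient every forced edge as required, and orient each remaining parent-to-middle-child edge by the local rule that it points \emph{away} from $y$ exactly when the edge from $y$ to its parent points \emph{toward} $y$. The crux is to verify that no vertex is farther than $r^2+3r-1$ from $a$ (and, by symmetry, vice versa). I expect the main obstacle to be precisely this verification, and the cleanest route is the amortization argument already hinted at in the excerpt: along a shortest path from a deepest vertex $v_r$ up to $a$, writing $d_i$ for the cost of the step from level $i$ to level $i-1$, one has $d_i \in \{1,\, 2(r-i+1),\, 4(r-i+1)-1\}$, and the orientation rule guarantees that an expensive step $d_i = 4(r-i+1)-1$ forces the \emph{next} step to be cheap, $d_{i-1} = 1$. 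I would make this precise by a downward induction on $r-j$ showing that if $\sum_{i=j}^r d_i > \sum_{i=j}^r 2(r-i+1)$ then $\sum_{i=j-1}^r d_i \leq \sum_{i=j-1}^r 2(r-i+1)$; taking $j=2$ splits into two cases, and in either case adding the top step bounded by $d_1 \leq 4r-1$ keeps the total at or below $r^2+3r-1$. Together with the symmetric out-distance argument and the isomorphism $H_1 \cong H_2$, this yields a strong orientation of $G_r$ of radius exactly $r^2+3r-1$, and hence the theorem.
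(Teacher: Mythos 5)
Your proposal follows essentially the same route as the paper: the same ternary-tree construction with forced directed middle edges on the length-$(2r-2i-1)$ paths, the same distance dictionary (Equation~5), the same lower-bound chain through $f$, $b$, $a$ (or $a$, $d$, $g$) giving $(r^2-r)+(4r-1)$, and the same alternating-orientation rule with the amortization/induction argument for the matching upper bound. The argument is correct and matches the paper's proof in all essentials.
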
 

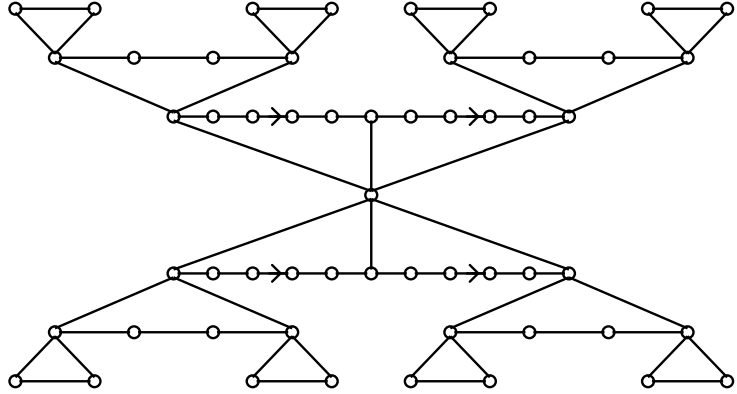
\begin{figure}[ht]
    
    \centering
   
\begin{tikzpicture}[scale=0.26]
\begin{scope}[every node/.style={sloped,allow upside down}]
\draw (0,2) circle (0.3cm); %a

\draw (0,6) circle (0.3cm); % a'

\draw (2,6) circle (0.3cm);
\draw (4,6) circle (0.3cm);
\draw (6,6) circle (0.3cm);
\draw (8,6) circle (0.3cm);
\draw (10,6) circle (0.3cm); % Level 1 Right

\draw (-2,6) circle (0.3cm);
\draw (-4,6) circle (0.3cm);
\draw (-6,6) circle (0.3cm);
\draw (-8,6) circle (0.3cm);
\draw (-10,6) circle (0.3cm); % Level 1 Left

%\draw (10,9) circle (0.3cm);

\draw (12,9) circle (0.3cm);
%\draw (14,9) circle (0.3cm);
\draw (16,9) circle (0.3cm);
\draw (8,9) circle (0.3cm);
%\draw (6,9) circle (0.3cm);
\draw (4,9) circle (0.3cm);

%\draw (-10,9) circle (0.3cm);

\draw (-12,9) circle (0.3cm);
%\draw (-14,9) circle (0.3cm);
\draw (-16,9) circle (0.3cm);
\draw (-8,9) circle (0.3cm);
%\draw (-6,9) circle (0.3cm);
\draw (-4,9) circle (0.3cm);

%\draw (16,11.5) circle (0.3cm);

\draw (18,11.5) circle (0.3cm);
\draw (14,11.5) circle (0.3cm);

%\draw (-4,11.5) circle (0.3cm);

\draw (-2,11.5) circle (0.3cm);
\draw (-6,11.5) circle (0.3cm);

%\draw (4,11.5) circle (0.3cm);

\draw (2,11.5) circle (0.3cm);
\draw (6,11.5) circle (0.3cm);

%\draw (-16,11.5) circle (0.3cm);

\draw (-18,11.5) circle (0.3cm);
\draw (-14,11.5) circle (0.3cm);

\draw (-17.80,11.5)--(-14.20,11.5);
%\draw (-15.80,11.5)--(-14.20,11.5);

\draw (-5.80,11.5)--(-2.20,11.5);
%\draw (-3.80,11.5)--(-2.20,11.5);

\draw (2.20,11.5)--(5.80,11.5);
%\draw (4.20,11.5)--(5.80,11.5);

\draw (14.20,11.5)--(17.80,11.5);
%\draw (16.20,11.5)--(17.80,11.5);

\draw (-15.80,9)--(-12.20,9);
%\draw (-13.80,9)--(-12.20,9);
\draw (-11.80,9)--(-8.20,9);
%\draw (-9.80,9)--(-8.20,9);
\draw (-7.80,9)--(-4.20,9);
%\draw (-5.80,9)--(-4.20,9);

\draw (4.20,9)--(7.80,9);
%\draw (6.20,9)--(7.80,9);
\draw (8.20,9)--(11.80,9);
%\draw (10.20,9)--(11.80,9);
\draw (12.20,9)--(15.80,9);
%\draw (14.20,9)--(15.80,9);

\draw (-9.80,6)--(-8.20,6);
\draw (-7.80,6)--(-6.20,6);
\draw (-5.80,6)--(-4.20,6);
\draw (-5.20,6)--node {\midarrowa}(-4.20,6);
\draw (-3.80,6)--(-2.20,6);
\draw (-1.80,6)--(-0.20,6);
\draw (0.20,6)--(1.80,6);
\draw (2.20,6)--(3.80,6);
\draw (4.20,6)--(5.80,6);
\draw (4.80,6)--node {\midarrowa}(5.80,6);
\draw (6.20,6)--(7.80,6);
\draw (8.20,6)--(9.80,6);

\draw (0,2.2)--(0,5.8);

%\draw (10,6.2)--(10,8.8);
%\draw (-10,6.2)--(-10,8.8);

%\draw (16,9.2)--(16,11.3);
%\draw (-16,9.2)--(-16,11.3);
%\draw (4,9.2)--(4,11.3);
%\draw (-4,9.2)--(-4,11.3);

\draw (-16,9.2)--(-18,11.3);
\draw (-4,9.2)--(-6,11.3);
\draw (-16,9.2)--(-14,11.3);
\draw (-4,9.2)--(-2,11.3);

\draw (16,9.2)--(18,11.3);
\draw (4,9.2)--(6,11.3);
\draw (16,9.2)--(14,11.3);
\draw (4,9.2)--(2,11.3);

\draw (10,6.2)--(16,8.8);
\draw (10,6.2)--(4,8.8);

\draw (-10,6.2)--(-16,8.8);
\draw (-10,6.2)--(-4,8.8);

\draw (0,2.2)--(10,5.8);
\draw (0,2.2)--(-10,5.8);

%%%%%%%%%%%%%%%%%%%%%%%%%%%%%%%%%%%%%%%%%%%%%%%%

\draw (0,-2) circle (0.3cm);

\draw (2,-2) circle (0.3cm);
\draw (4,-2) circle (0.3cm);
\draw (6,-2) circle (0.3cm);
\draw (8,-2) circle (0.3cm);
\draw (10,-2) circle (0.3cm);

\draw (-2,-2) circle (0.3cm);
\draw (-4,-2) circle (0.3cm);
\draw (-6,-2) circle (0.3cm);
\draw (-8,-2) circle (0.3cm);
\draw (-10,-2) circle (0.3cm);

%\draw (10,-5) circle (0.3cm);

\draw (12,-5) circle (0.3cm);
%\draw (14,-5) circle (0.3cm);
\draw (16,-5) circle (0.3cm);
\draw (8,-5) circle (0.3cm);
%\draw (6,-5) circle (0.3cm);
\draw (4,-5) circle (0.3cm);

%\draw (-10,-5) circle (0.3cm);

\draw (-12,-5) circle (0.3cm);
%\draw (-14,-5) circle (0.3cm);
\draw (-16,-5) circle (0.3cm);
\draw (-8,-5) circle (0.3cm);
%\draw (-6,-5) circle (0.3cm);
\draw (-4,-5) circle (0.3cm);

%\draw (16,-7.5) circle (0.3cm);

\draw (18,-7.5) circle (0.3cm);
\draw (14,-7.5) circle (0.3cm);

%\draw (-4,-7.5) circle (0.3cm);

\draw (-2,-7.5) circle (0.3cm);
\draw (-6,-7.5) circle (0.3cm);

%\draw (4,-7.5) circle (0.3cm);

\draw (2,-7.5) circle (0.3cm);
\draw (6,-7.5) circle (0.3cm);

%\draw (-16,-7.5) circle (0.3cm);

\draw (-18,-7.5) circle (0.3cm);
\draw (-14,-7.5) circle (0.3cm);

\draw (-17.80,-7.5)--(-14.20,-7.5);
%\draw (-15.80,-7.5)--(-14.20,-7.5);

\draw (-5.80,-7.5)--(-2.20,-7.5);
%\draw (-3.80,-7.5)--(-2.20,-7.5);

\draw (2.20,-7.5)--(5.80,-7.5);
%\draw (4.20,-7.5)--(5.80,-7.5);

\draw (14.20,-7.5)--(17.80,-7.5);
%\draw (16.20,-7.5)--(17.80,-7.5);

\draw (-15.80,-5)--(-12.20,-5);
%\draw (-13.80,-5)--(-12.20,-5);
\draw (-11.80,-5)--(-8.20,-5);
%\draw (-9.80,-5)--(-8.20,-5);
\draw (-7.80,-5)--(-4.20,-5);
%\draw (-5.80,-5)--(-4.20,-5);

\draw (4.20,-5)--(7.80,-5);
%\draw (6.20,-5)--(7.80,-5);
\draw (8.20,-5)--(11.80,-5);
%\draw (10.20,-5)--(11.80,-5);
\draw (12.20,-5)--(15.80,-5);
%\draw (14.20,-5)--(15.80,-5);

\draw (-9.80,-2)--(-8.20,-2);
\draw (-7.80,-2)--(-6.20,-2);
\draw (-5.80,-2)--(-4.20,-2);
\draw (-5.20,-2)--node {\midarrowa}(-4.20,-2);
\draw (-3.80,-2)--(-2.20,-2);
\draw (-1.80,-2)--(-0.20,-2);
\draw (0.20,-2)--(1.80,-2);
\draw (2.20,-2)--(3.80,-2);
\draw (4.20,-2)--(5.80,-2);
\draw (4.80,-2)--node {\midarrowa}(5.80,-2);
\draw (6.20,-2)--(7.80,-2);
\draw (8.20,-2)--(9.80,-2);

\draw (0,1.8)--(0,-1.8);

%\draw (10,-2.2)--(10,-4.8);
%\draw (-10,-2.2)--(-10,-4.8);

%\draw (16,-5.2)--(16,-7.3);
%\draw (-16,-5.2)--(-16,-7.3);
%\draw (4,-5.2)--(4,-7.3);
%\draw (-4,-5.2)--(-4,-7.3);

\draw (-16,-5.2)--(-18,-7.3);
\draw (-4,-5.2)--(-6,-7.3);
\draw (-16,-5.2)--(-14,-7.3);
\draw (-4,-5.2)--(-2,-7.3);

\draw (16,-5.2)--(18,-7.3);
\draw (4,-5.2)--(6,-7.3);
\draw (16,-5.2)--(14,-7.3);
\draw (4,-5.2)--(2,-7.3);

\draw (10,-2.2)--(16,-4.8);
\draw (10,-2.2)--(4,-4.8);

\draw (-10,-2.2)--(-16,-4.8);
\draw (-10,-2.2)--(-4,-4.8);

\draw (0,1.8)--(10,-1.8);
\draw (0,1.8)--(-10,-1.8);

\end{scope}
\end{tikzpicture}

    \caption{A Mixed multigraph of radius $3$ and oriented radius $17$.}
    \label{fig:fig3}
\end{figure}

Notice that, the same bound could be obtained by modifying only the first level in the example of Chv{\'a}tal and Thomassen (See Figure~\ref{fig:fig3}). But still, we chose to analyze $G_r$, because our algorithm will orient $G_r$ with an oriented radius of $1.5r^2+O(r)$ only. That is, $G_r$ demonstrates the tightness of the analysis of our algorithm. Hence, there is scope for a better algorithm and we conjecture that $f(r)=r^2+O(r)$.  

\begin{conjecture}\label{c_1}
$f(r) = r^2+O(r)$. 
\end{conjecture}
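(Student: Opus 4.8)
The plan is to close the remaining gap between the lower bound $f(r) \geq r^2 + 3r - 1$ (Theorem~\ref{lbmmg}) and the upper bound $f(r) \leq 1.5r^2 + r + 1$ (Theorem~\ref{ubmmg}). Since the lower bound is already of the form $r^2 + O(r)$, establishing the conjecture reduces entirely to improving the upper bound to $r^2 + O(r)$; equivalently, one must design an orientation algorithm whose output radius on every $2$-edge connected mixed multigraph of radius $r$ is at most $r^2 + O(r)$.

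First I would pin down the exact source of the stray $\tfrac{1}{2}r^2$ term in the present analysis. In each phase $i$, algorithm {\sc OrientOut} captures a dominating shell while guaranteeing $e_{out}(u_i) \leq 2i$ but only $e_{in}(u_i) \leq 4i-1$ (and {\sc OrientIn} the reverse). Summing the cheap direction over all phases gives $\Sigma_{i=1}^r 2i = r^2 + r$, which is the target. The overhead comes from the expensive direction: after balancing via Lemma~\ref{oddlemma}, each phase still contributes an extra $(2i-1)$ to one of the two eccentricities, and $\Sigma_{i=1}^r(2i-1) = r^2$ split into two nearly equal halves leaves the residual $\tfrac{1}{2}r^2$. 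Thus the factor $1.5$ is not an artifact of loose estimates but reflects a genuine per-phase asymmetry of factor $2$ between $e_{out}$ and $e_{in}$.

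The crux is therefore to obtain, within each phase, a shell whose captured vertices satisfy both $d_{\ora{H_i}}(u_i, v) \leq 2i + O(1)$ and $d_{\ora{H_i}}(v, u_i) \leq 2i + O(1)$ simultaneously. In {\sc OrientOut} the asymmetry is localized: the vertices of $S_1$ already meet both bounds at $2i$, and only the vertices of $S_2 = V(T_{in}) \setminus \{u^*\}$ incur the $4i-1$ in-distance, because a short return path reaches the contracted blob $u^*$ and must then traverse up to $2i$ further edges inside $S_1$ to regain $u_i$. A promising route is to control where such return paths enter $S_1$ so that this internal detour is $O(1)$ rather than $2i$; a second route is to abandon the phase-by-phase contraction altogether and adapt the ear-decomposition argument of Chv{\'a}tal and Thomassen \cite{chvatal1978distances}, which attains the exactly tight $r^2 + r$ in the undirected case, orienting each ear so that both directions of travel stay within the radius budget.

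The hard part will be handling the edges whose orientation is forced. In the undirected setting every ear may be oriented in whichever direction is locally convenient, and this freedom is precisely what keeps the Chv{\'a}tal--Thomassen bound balanced; in a mixed multigraph the directed edges fix the orientation of many ears, reintroducing exactly the in/out imbalance we are trying to remove. The graph $G_r$ of Section~\ref{secLowerBound}, on which the present algorithm provably pays $1.5r^2 + O(r)$, is a concrete stress test: any candidate algorithm must be checked to orient $G_r$ within $r^2 + O(r)$, and understanding why the forced directed ears of $G_r$ defeat the current contraction scheme is, I expect, the key step toward either a genuinely better algorithm or, failing that, a matching super-$r^2$ lower bound that would refute the conjecture.
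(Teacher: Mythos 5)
This statement is a \emph{conjecture} which the paper explicitly leaves open; it has no proof in the paper, and your proposal does not supply one either. What you have written is an accurate diagnosis of the obstacle rather than an argument: you correctly observe that the lower bound $r^2+3r-1$ already has the conjectured form, so the conjecture is equivalent to pushing the upper bound down from $1.5r^2+r+1$ to $r^2+O(r)$; you correctly locate the residual $\tfrac{1}{2}r^2$ in the per-phase factor-$2$ asymmetry between $e_{out}(u_i)\leq 2i$ and $e_{in}(u_i)\leq 4i-1$, which Lemma~\ref{oddlemma} can only halve, not remove; and you correctly identify $G_r$ from Section~\ref{secLowerBound} as the stress test, consistent with the authors' own remark that their algorithm orients $G_r$ with radius $1.5r^2+O(r)$, i.e.\ that the analysis of the current algorithm is tight and a new idea is needed.

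The genuine gap is that neither of your two proposed routes is carried out, and each stalls at exactly the point you yourself flag. For the first route, you would need to show that every vertex of $S_2$ admits a return path entering $S_1$ at a vertex within $O(1)$ of $u_i$, but nothing in the structure of $T_{in}$ or of the contraction to $u^*$ forces this: a shortest cycle through an arc $\ora{uv}$ with $v\in X_{out}'$ may legitimately re-enter $S_1$ at a leaf of $T_{out}$ at depth $2i$, and the directed edges of $G$ can make every such cycle do so. For the second route, the Chv\'atal--Thomassen ear argument relies on the freedom to choose the direction of each ear, and as you note the directed edges of a mixed graph remove that freedom; you do not offer a mechanism to recover the balance once it is lost. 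So the proposal should be read as a correct framing of the open problem, not as progress toward resolving it; as it stands the conjecture remains exactly as open after your proposal as before it.
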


%%%%%%%%%%%%%%%%%%%%%%%%%%%%%%%%%%%%%%%%%%%%%%%%%%%%%%%%%%

\section*{Appendix}

\odds*

\begin{proof}
If $n=2$, then the partition $(\{1\},\{2\})$ suffices. Hence we assume $n \neq 2$. 

Any four consecutive odd numbers $a_1,a_2,a_3,a_4$ can be split into two parts $\{a_1,a_4\}$ and $\{a_2,a_3\}$ with equal sum. We partition every block of four consecutive odd numbers in $S$ this way, starting from the largest and going backwards, till we are left with $0,1,6$ or $3$ smallest odd numbers. So far the two parts have equal sum. The partition    
 $(\{\phi\},\{\phi\})$, $(\{\phi\},\{1\})$, $(\{1,3,5,9\},\{7,11\})$, $(\{1,3\},\{5\})$, respectively for each case above can be appended to get the required partition. 

It can be verified that the partition can be generated in polynomial time.
\end{proof}

Chung, Garey and Tarjan proved the following.
\begin{lemma}\cite[Lemma~$5$]{chung1985strongly} 
Let $G$ be a strongly connected bridgeless mixed multigraph of radius $r$ with a central vertex $u$. Then any edge incident to $u$ is on a cycle of length at most $2r+1$. 
\end{lemma}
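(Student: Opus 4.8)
The plan is to split on the type of the edge and treat the undirected case as the real content. Write the edge as joining $u$ and $v$. If it is directed, say $\overrightarrow{uv}$, then since $u$ is central we have $d_G(v,u)\le e_{in}(u)\le r$; a shortest $v\to u$ path cannot reuse $\overrightarrow{uv}$ (that would revisit $u$), so appending the edge to it yields a cycle of length at most $r+1\le 2r+1$. The case $\overrightarrow{vu}$ is symmetric, using $d_G(u,v)\le e_{out}(u)\le r$. So from now on I would assume the edge $e=\{u,v\}$ is undirected.

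For the undirected case, the key observation is that, since $e$ is not a bridge, $G-e$ is still strongly connected, and every simple cycle through $e$ consists of $e$ together with a path joining its endpoints in $G-e$; hence the shortest cycle through $e$ has length exactly $1+\min\{d_{G-e}(u,v),\,d_{G-e}(v,u)\}$. So it suffices to prove $\min\{d_{G-e}(u,v),\,d_{G-e}(v,u)\}\le 2r$. I would argue this by contradiction, assuming both distances exceed $2r$. Fix a shortest $v\to u$ path $Q=(q_0=v,q_1,\dots,q_L=u)$ in $G-e$, where $L=d_{G-e}(v,u)\ge 2r+1$, and single out the interior vertex $q_i$ with $i=L-r-1$ (which satisfies $1\le i<L-r$).

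The crux is a detour argument at $q_i$ that exploits the fact that $e$ is incident to $u$. Since $d_G(q_i,u)\le r$ but the prefix $Q[v\to q_i]$ followed by any $e$-avoiding $q_i\to u$ path of length $\le r$ would beat $Q$, every shortest $q_i\to u$ path in $G$ must use $e$; as $e$ touches $u$, it can only be the last edge of such a path, so $d_{G-e}(q_i,v)\le r-1$. Now I would examine a shortest $u\to q_i$ path (length $\le r$, as $u$ is central): if it avoids $e$, then concatenating it with the $q_i\to v$ detour gives a $u\to v$ walk in $G-e$ of length $\le 2r-1$; if it uses $e$, then (again $e$ touches $u$, now forcing it to be the first edge) $d_{G-e}(v,q_i)\le r-1$, and prefixing this to $Q[q_i\to u]$ (of length $L-i=r+1$) gives a $v\to u$ walk of length $\le 2r$. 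Either outcome contradicts the standing assumption. Finally, from $\min\{d_{G-e}(u,v),\,d_{G-e}(v,u)\}\le 2r$ one takes a shortest path in $G-e$ between the endpoints and closes it with $e$ to obtain a genuine simple cycle through $e$ of length at most $2r+1$.

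I expect the main obstacle to be exactly this undirected sub-case when $e$ is the unique short link between $u$ and $v$ (so that $d_G(u,v)=d_G(v,u)=1$ is realised only through $e$): here the radius bound gives no immediate $e$-avoiding route, and the whole argument rests on choosing the interior vertex $q_i$ at the precise offset $L-r-1$ and on using the incidence of $e$ with $u$ in both directions. The directed cases and the passage from a bounded-length walk to an actual simple cycle are routine by comparison.
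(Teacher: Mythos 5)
Your proof is correct, but it takes a genuinely different route from the paper's. The paper never proves the incident-edge case separately: it proves the stronger Lemma~\ref{tarlemma} (every edge $pq$, not just edges at $u$) by a closed-walk argument --- it is enough to exhibit a closed walk of length at most $2r+1$ through $pq$ exactly once; one defines $X$ (resp.\ $Y$) as the set of vertices all of whose shortest paths from (resp.\ to) $u$ use $pq$, handles $X \neq Y$ directly, shows $p$ and $q$ cannot both lie in $X$, and in the remaining case $pq$ crosses the cut $(X,\overline{X})$, where bridgelessness supplies a \emph{second} crossing edge $ab$ whose shortest-path connections to $u$ close the required walk. You instead delete $e$ (legitimate: by the paper's definition of a bridge, $G-e$ stays strongly connected), reduce the claim to $\min\{d_{G-e}(u,v),\,d_{G-e}(v,u)\}\le 2r$, and run a contradiction from the calibrated vertex $q_i$ at offset $i=L-r-1$ on a $G-e$ geodesic; the forcing steps --- $e$ must be the \emph{last} edge of any shortest $q_i\to u$ path that uses it, and the \emph{first} edge of a shortest $u\to q_i$ path that uses it --- are sound precisely because $e$ is incident with $u$ and a path cannot revisit $u$, and I verified the arithmetic in both branches ($\le 2r-1$ against $d_{G-e}(u,v)>2r$, and $\le (r-1)+(r+1)=2r$ against $L\ge 2r+1$). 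The trade-off is clear: your first/last-edge forcing is exactly what breaks for an edge not incident with $u$, so your argument does not extend to Lemma~\ref{tarlemma}, which is the whole point of the paper's cut-based generalization; in exchange, your argument is more elementary (no $X=Y$ case analysis, no auxiliary cut edge, bridgelessness used only to make $d_{G-e}$ finite), it produces a genuine simple cycle directly rather than a closed walk traversing the edge once, and your directed case yields the sharper bound $r+1$.
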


We generalize this result by extending it to every edge in $G$ (Lemma~\ref{tarlemma}). The proof given in \cite{chung1985strongly} can be used to show that every edge $pq$ of $G$ is on a cycle of length at most $2k+1$, where $k$ is the eccentricity of $p$. In fact further results in \cite{chung1985strongly} make use of this version, even though it is not explicitly stated so. But, we argue in Lemma~\ref{tarlemma} that any edge $pq$ of $G$ is on a cycle of length at most $2r+1$ itself. The proof of Lemma~\ref{tarlemma} given below is an extension of the proof given by Chung, Garey and Tarjan \cite{chung1985strongly}.
\goldbach*

\begin{proof}
Let $G$ be a strongly connected bridgeless mixed multigraph of radius $r$ with a central vertex $u$. First we consider the case when $\ora{pq}$ is a directed edge oriented from $p$ to $q$. Since $G$ is of radius $r$, there exists a $u-p$ path of length at most $r$ and a $q-u$ path of length at most $r$. Hence, there is a walk and therefore a path of length at most $2r$ from $q$ to $p$. Together with $\ora{pq}$, it forms a cycle of length at most $2r+1$ containing $\ora{pq}$. 

Now, let $pq$ be an undirected edge. In order to show that an edge $pq$ lies in a cycle of length at most $2r+1$, it is enough to show that it lies in a closed walk of length at most $2r+1$ which passes through $pq$ exactly once. We define two subsets $X$ and $Y$ of $V(G)$ as follows. A vertex $v \in V(G)$ is in $X$ if every shortest path from $u$ to $v$ contains $pq$. A vertex $v \in V(G)$ is in $Y$ if every shortest path from $v$ to $u$ contains $pq$. First, we handle the case when $X \neq Y$. Assume, there exists a vertex $w \in (X \setminus Y)$. Since $w \in X$, all the shortest paths from $u$ to $w$ contains $pq$. Let $P_w$ be one such path. Since $w \notin Y$, at least one of the shortest paths from $w$ to $u$ do not pass through $pq$. Let $P_u$ be one such path. Now, $P_w \cup P_u$ is a closed walk of length at most $2r$ containing the edge $pq$ exactly once. A similar argument could also be made, if there exists a vertex $w \in (Y \setminus X)$. Henceforth, we assume, $X=Y$. 

Let $\overline{X} = V(G) \setminus X$ and $\overline{Y} = V(G) \setminus Y$. Notice that, since $X=Y$, $\overline{X}=\overline{Y}$. Further notice that $\overline{X} \ne \phi$ as $u \in \overline{X}$. Now, assume vertices $p, q \in \overline{X}$. Since $\overline{X}=\overline{Y}$, there is a shortest path from $u$ to $p$ of length at most $r$ and a shortest path from $q$ to $u$ of length at most $r$, both of which  do not pass through $pq$. Hence, it is easy to see that there is a closed walk of length at most $2r+1$ containing $pq$ exactly once. Next we show that $p$ and $q$ cannot both be in $X$. If $q \in X$, by the definition of $X$, every shortest path from $u$ to $q$ passes through $pq$. Any of those paths on removing the edge $pq$ will give a shortest path from $u$ to $p$ not containing $pq$. Hence, $p \notin X$. Similarly, if $p \in X$ then $q \notin X$. 

Henceforth, we can assume that $pq$ crosses the cut $(X,\overline{X})$.
Since, $G$ does not contain any bridges, there is an edge $ab$, other than $pq$, crossing the cut $(X, \overline{X})$ with $a \in X$ and $b \in \overline{X}$. First, we consider the case when the edge $ab$ is undirected or directed from $a$ to $b$. Since $a \in X$, all the shortest paths from $u$ to $a$ contains the edge $pq$. Let $P_a$ be one such path. Since $b \in \overline{Y}$, there exists at least one shortest path $P_u$ from $b$ to $u$ which does not contain $pq$. Hence, $P_a \cup P_u \cup \{ab\}$ is a closed walk of length at most $2r+1$ containing the edge $pq$ exactly once. Finally, we consider the case when $ab$ is oriented from $b$ to $a$. Since $a \in Y$, every shortest path from $a$ to $u$ passes through the edge $pq$. Since $b \notin X$, there exists at least one shortest path from $u$ to $b$ which does not contain the edge $pq$. Hence, in this case also, there exists a closed walk of length at most $2r+1$ containing the edge $pq$ exactly once.                         
\end{proof}

\oddevens*

\begin{proof}
When $n = 2$, $S_1=\{1, 3\}$ which can be partitioned into two sets $X$ and $Y$ such that $\lvert \Sigma_{a \in X} a - \Sigma_{b \in Y} b \rvert = 2$. When $n \neq 2$, by Lemma~\ref{oddlemma}, $S_1$ can be partitioned into two sets $X$ and $Y$ such that $\lvert \Sigma_{a \in X} a - \Sigma_{b \in Y} b \rvert \leq 1$. 
\begin{itemize}

\item
If $k$ is even, $S_2$ can be partitioned into two multisets 
$P$ and $Q$ with $\frac{k}{2}$ elements each. The sum of elements in both $P$ and $Q$ are the same. 
\item
If $k$ is odd, let $S_1'=S_1 \setminus \{2n-1\}$ and $S_2'$ be the multiset $S_2 \cup \{2n-1\}$.  As done before, $S_1'$ can be partitioned into two sets $X$ and $Y$ such that $ \Sigma_{b \in Y} b \leq \Sigma_{a \in X} a \leq (\Sigma_{b \in Y} b) + 2$. Further, since the value of the elements in the multiset $S_2$ is either $2n-1$ or $2n$, $S_2'$ can be partitioned into two multisets $P$ and $Q$ such that $\Sigma_{b \in Q} b \leq \Sigma_{a \in P} a \leq (\Sigma_{b \in Q} b) + 1$.
\end{itemize}
Now, $A=X \cup Q$ and $B = Y \cup P$ is the required partition of $S$.

It can be verified that the partition can be generated in polynomial time. 
\end{proof} 

\end{document}